\date{}
\newlength{\defbaselineskip}
\newcommand{\setlinespacing}[1]%
           {\setlength{\baselineskip}{#1 \defbaselineskip}}
\newcommand{\actaqed}{\hfill $\actabox$}
{\medskip\noindent \textit{Proof of #1. }}%
{\actaqed \medskip}
\def\cA{{\mathcal A}}
\def\cC{{\mathcal C}}
\def\cD{{\mathcal D}}
\def\cK{{\mathcal K}}
\def\cL{{\mathcal L}}
\def\cM{{\mathcal M}}
\def\cS{{\mathcal S}}
\def\cT{{\mathcal T}}
\def\cV{{\mathcal V}}
\def\bbC{{\mathbb C}}
\def\bbN{{\mathbb N}}
\def\bbR{{\mathbb R}}
\def\bbT{{\mathbb T}}
\def\bbZ{{\mathbb Z}}
\def\bF{{\mathbf F}}
\def\bH{{\mathbf H}}
\def\bK{{\mathbf K}}
\def\bN{{\mathbf N}}
\def\bW{{\mathbf W}}
\def\ba{\mathbf a}
\def\bb{\mathbf b}
\def\bj{\mathbf j}
\def\bk{\mathbf k}
\def\bp{\mathbf p}
\def\bq{\mathbf q}
\def\br{\mathbf r}
\def\bs{\mathbf s}
\def\btt{\mathbf t}
\def\bx{\mathbf x}
\def\by{\mathbf y}
\def\bz{\mathbf z}
 \def \<{\langle}
\def\>{\rangle}
\def \La{\Lambda}
\def \Og{\Omega}
\def \e{\varepsilon}
\def \ff{\varphi}
\def\al{\alpha}
\def\bt{\beta}
\def \ro{\varrho}
\def\la{\lambda}
\def \sp{\operatorname{span}}
\def\bt{\beta}
\newtheorem{Theorem}{Theorem}[section]
\newtheorem{Lemma}{Lemma}[section]
\newtheorem{Definition}{Definition}[section]
\newtheorem{Proposition}{Proposition}[section]
\numberwithin{equation}{section}
\newcommand{\be}{\begin{equation}}
\newcommand{\ee}{\end{equation}}
\begin{document}

\title{Sampling recovery on classes defined by integral operators and sparse approximation with adaptive dictionaries}

\author{V. Temlyakov}

\newcommand{\Addresses}{{
  \bigskip
  \footnotesize
 
  \medskip
  V.N. Temlyakov, \textsc{University of South Carolina, USA,\\ Steklov Mathematical Institute of Russian Academy of Sciences, Russia;\\ Lomonosov Moscow State University, Russia; \\ Moscow Center of Fundamental and Applied Mathematics, Russia.\\  
E-mail:} \texttt{temlyakovv@gmail.com}

}}
\maketitle

\begin{abstract}{In this paper we continue to develop the following general approach. We study asymptotic behavior of the errors of sampling recovery not for an individual smoothness class, how it is usually done, but for the collection of classes, which are defined by integral operators with kernels coming from a given class of functions. Earlier, such approach was realized for the Kolmogorov widths and very recently for the entropy numbers. It turns out that the above problem is closely related to the sparse approximation problem with respect to different redundant dictionaries. Specifically, the problem of sampling recovery is connected with sparse nonlinear approximation with respect to adaptive dictionaries, which means that the dictionary depends on the function under approximation.}
\end{abstract}

\section{Introduction}
\label{In}

 This paper is a followup to the very recent paper \cite{VT212}. We continue to develop the following general setting, which was formulated in \cite{VT31}. 
A typical smoothness class can be defined with a help of an integral operator with a special kernel. For instance, in the case 
of periodic functions the Bernoulli kernel can be used. In \cite{VT31} we suggested to study asymptotic characteristics
(Kolmogorov widths) not for an individual smoothness class but for the collection of classes, which are defined by integral operators with kernels coming from a given class of functions. In \cite{VT212} we followed the setting of \cite{VT31} and obtained some results in the case, when the asymptotic characteristic under investigation is the entropy numbers. 
In this paper we continue to follow the above pattern developed in \cite{VT31} and prove the corresponding results 
in the case, when the asymptotic characteristic is the error of sampling recovery. 

We now proceed to the detailed presentation. Let $(\Omega,\mu)$ be a probability space. 
  By the $L_p$, $1\le p< \infty$, norm we understand
$$
\|f\|_p:=\|f\|_{L_p(\Omega,\mu)} := \left(\int_\Omega |f|^p\,d\mu\right)^{1/p}.
$$
By the $L_\infty$-norm we understand the uniform norm of continuous functions
$$
\|f\|_\infty := \sup_{\omega\in\Omega} |f(\omega)|
$$
and with some abuse of notation we occasionally write $L_\infty(\Omega)$ for the space $\cC(\Omega)$ of continuous functions on $\Omega$. We define the vector $L_{\bp}$-norm, $\bp=(p_1,\dots,p_v)$, of functions of $v$ variables $\bx=(x_1,\dots,x_v)$ as
$$
\|f(\bx)\|_{\bp} :=\|f(\bx)\|_{(p_1,\dots,p_v)} :=\|f(\bx)\|_{p_1,\dots,p_v} := \|\cdots\|f(\cdot,x_2,\dots,x_v)\|_{p_1}\cdots\|_{p_v}.
$$

\subsection{Sampling recovery}
\label{Insr}

We begin with a formulation of the general problem, which we study here in a special case of sampling recovery. We give this formulation in the case of $d$-variate classes $\bW^K_q$ (see below) and note that we mostly study the univariate case ($d=1$) in this paper.  We stress that the problem is of high interest and importance for all dimensions $d$. It seems like in the case $d>1$ it is even more difficult than in the case $d=1$, where it is not yet fully solved. 

{\bf Problem $K$. Formulation in general setting.} Let $\Omega^i$, $i=1,2$, be compact sets in $\bbR^d$ with probability measures $\mu_i$ on them.  Let $K(\bx,\by)$ be a measurable with respect to $\mu_1\times\mu_2$ function on $\Omega^1\times\Omega^2\subset \bbR^{2d}$.
Assume that for all $\ff\in L_q(\Omega^2,\mu_2)$   the integral
$$
I_K(\ff):=\int_{\Omega^2}K(\bx,\by)\varphi(\by)d\mu_2
$$
exists for all $\bx\in\Omega^1$. 

 Let $1\le q\le \infty$ and $q':= q/(q-1)$ be the dual to $q$.  Then we define the class
\be\label{In1}
\bW^K_q :=\left\{f:f(\bx) = \int_{\Omega^2}K(\bx,\by)\varphi(\by)d\mu_2,\quad\|\varphi\|_{L_q(\Omega^2,\mu_2)}\le 1\right\}.  
\ee
Clearly, the class $\bW^K_q $ is the image of the unit ball of the space $L_q(\Omega^2,\mu_2)$ of the integral operator $I_K$. 

We are interested in the following problem formulated in \cite{VT212} (see also \cite{VT31}). Assume that a class (a collection) $\bK$ of kernels $K$ is given. Consider a specific asymptotic characteristic $ac_n(\bW^K_q,L_p)$ of classes $\bW^K_q$ in the space $L_p(\Omega^1,\mu_1)$,
$1\le q,p\le \infty$. We want to estimate the following characteristics of the collection $\bK$
$$
ac_n(\bK,L_q,L_p) := \sup_{K\in\bK}  ac_n(\bW^K_q,L_p).
$$

The above problem has two important ingredients -- the collection $\bK$ of kernels of integral operators and the asymptotic characteristic $ac_n(\cdot,\cdot,\cdot)$. The above problem is relatively well studied in the case, when the asymptotic characteristic $ac_n$ is the Kolmogorov width $d_n$. The reader can find the corresponding results in \cite{VT31}, \cite{VT32},  and \cite{VT47}. A brief description of some of those results can be found in \cite{VT211}. 
For illustration we formulate one of those results.    The following result is from \cite{VT32}. We need the following notation for $1\le q,p\le\infty$
 \be\label{ksi'}
 \xi(q,p):= \left(\frac{1}{q} - \max\left(\frac{1}{2},\frac{1}{p}\right)\right)_+, \quad  (a)_+ :=\max(a,0).
\ee

 We also use the following convenient notations. We use $C$, $C'$ and $c$, $c'$ to denote various positive constants. Their arguments indicate the parameters, which they may depend on. Normally, these constants do not depend on a function $f$ and running parameters usually denoted by $m$, $n$, $k$. We use the following symbols for brevity. For two nonnegative sequences $a=\{a_n\}_{n=1}^\infty$ and $b=\{b_n\}_{n=1}^\infty$ the relation $a_n\ll b_n$ means that there is  a number $C(a,b)$ such that for all $n$ we have $a_n\le C(a,b)b_n$. Relation $a_n\gg b_n$ means that 
 $b_n\ll a_n$ and $a_n\asymp b_n$ means that $a_n\ll b_n$ and $a_n \gg b_n$. 
 For a real number $x$ denote $[x]$ the integer part of $x$.
  
 Here is a known result on the Kolmogorov widths.   Let $X$ be a Banach space and $\bF\subset X$ be a  compact subset of $X$. The quantities  
$$
d_n (\bF, X) :=  \inf_{\{u_i\}_{i=1}^n\subset X}
\sup_{f\in \bF}
\inf_{c_i} \left \| f - \sum_{i=1}^{n}
c_i u_i \right\|_X, \quad n = 1, 2, \dots,
$$
are called the {\it Kolmogorov widths} of $\bF$ in $X$.

 \begin{Theorem}[{\cite{VT32}}]\label{InT1} Let $d=1$ and $\bF^\br_1$ denote one of the classes $\bW^\br_1$ or $\bH^\br_1$ (see the definition in Section \ref{fc} below) of functions of two variables. Then for $1\le q,p \le \infty$ and $\br > (1,1+\max(1/2,1/q))$   we have 
 $$
\sup_{K\in \bF^\br_1}  d_n(\bW^K_q)_p \asymp n^{-r_1-r_2 + \xi(q,p)} 
 $$
 with $\xi(q,p)$ defined in (\ref{ksi'}).
 \end{Theorem}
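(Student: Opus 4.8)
The plan is to prove matching upper and lower bounds, exploiting the fact that the smoothness of the kernel in its two variables combines additively: the class $\bW^K_q$ behaves like a univariate smoothness class of order $r_1+r_2$, whose Kolmogorov width carries precisely the correction $\xi(q,p)$. Thus the target order $n^{-r_1-r_2+\xi(q,p)}$ should be read as the classical univariate width order with $r$ replaced by $r_1+r_2$.

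For the upper bound, fix $K\in\bF^\br_1$ and note that if $K_m(\bx,\by)=\sum_{i=1}^m u_i(\bx)v_i(\by)$ is any rank-$m$ kernel, then $I_{K_m}$ has range in $\sp\{u_i\}_{i=1}^m$, so for $f=I_K(\ff)$ with $\|\ff\|_q\le1$ we have $f-I_{K_m}(\ff)=I_{K-K_m}(\ff)$ and, by H\"older in $\by$,
$$
\|f-I_{K_m}(\ff)\|_p\le \Big\|\,\|(K-K_m)(\bx,\by)\|_{L_{q'}(\by)}\,\Big\|_{L_p(\bx)}.
$$
Hence $d_m(\bW^K_q)_p$ is dominated by the best rank-$m$ bilinear approximation of $K$ in the mixed norm that is $L_{q'}$ in $\by$ and $L_p$ in $\bx$. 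First I would reduce this bilinear approximation to a hyperbolic-cross decomposition $K=\sum_{\bs}A_{\bs}(K)$, where $A_{\bs}$ is the dyadic block with frequencies of order $(2^{s_1},2^{s_2})$; membership $K\in\bW^\br_1$ gives $\|A_{\bs}(K)\|_1\ll 2^{-(r_1s_1+r_2s_2)}$ (with a supremum version for $\bH^\br_1$). On each block I would use a Nikolskii-type inequality to pass from $L_1$ to the mixed $(q',p)$ norm and to control the singular values of $A_{\bs}(K)$ as a bilinear form, approximate the block by a rank-$r_{\bs}$ form, and finally choose the allocation $\{r_{\bs}\}$ with $\sum_{\bs}r_{\bs}\asymp m$ minimizing the total error. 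This optimization produces the exponent $m^{-(r_1+r_2)+\xi(q,p)}$ uniformly in $K$.

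For the lower bound it suffices to exhibit one kernel $K^*\in\bF^\br_1$ with $d_n(\bW^{K^*}_q)_p\gg n^{-r_1-r_2+\xi(q,p)}$. The product Bernoulli kernel is useless here (it makes $I_{K^*}$ rank one), so instead I would take a kernel supported on a "diagonal matching" of frequencies, $K^*(\bx,\by)=\sum_j a_j e^{i(k_j\bx+l_j\by)}$, with the indices $k_j$ pairwise distinct and the $l_j$ pairwise distinct, $l_j\asymp k_j$, and $a_j\asymp|k_j|^{-r_1}|l_j|^{-r_2}\asymp|k_j|^{-(r_1+r_2)}$, normalized to keep $K^*$ inside $\bW^\br_1$ (resp. $\bH^\br_1$). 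Feeding $\ff(\by)=\sum_jb_je^{-il_j\by}$ into $I_{K^*}$ yields $f(\bx)=\sum_ja_jb_je^{ik_j\bx}$, so $\bW^{K^*}_q$ contains a diagonal image of the $L_q$-ball mimicking a univariate class of smoothness $r_1+r_2$. I would then invoke the classical univariate lower bound of order $n^{-(r_1+r_2)+\xi(q,p)}$ for the Kolmogorov width of a smoothness-$(r_1+r_2)$ class, or equivalently reduce on a single frequency block to the finite-dimensional estimate for $d_n(B_q^N,\ell_p^N)$, whose known asymptotics supply exactly the factor $\xi(q,p)$ after the Marcinkiewicz--Nikolskii conversion between coefficient and $L_p$ norms.

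The main obstacle is the upper bound: establishing the sharp rank-$m$ bilinear approximation of mixed-smoothness kernels in the mixed $(q',p)$ norm with the exact correction $\xi(q,p)$. The delicate points are the Nikolskii inequalities governing both the singular-value decay of each dyadic block and the passage from the $L_1$-smoothness normalization to the evaluation norm, together with the ensuing discrete optimization of the rank allocation $\{r_{\bs}\}$ over the hyperbolic cross, which must be carried out separately according to whether $1/q\le\max(1/2,1/p)$ or $1/q>\max(1/2,1/p)$ -- the two regimes distinguished by the definition of $\xi(q,p)$. A secondary technical point is keeping the $L_1$-norm of the generating function bounded in the lower-bound construction, since the diagonal Dirichlet-type sum costs a logarithm that must be absorbed to reach the sharp order rather than an order with a logarithmic loss.
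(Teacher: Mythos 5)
First, a framing remark: the paper does not prove Theorem \ref{InT1} at all --- it is quoted from \cite{VT32} and restated later as Theorem \ref{BiT5} --- so there is no in-paper proof to compare with step by step. Judged on its own, your proposal has a genuine gap in the upper bound. Your entire upper-bound argument passes through the inequality
$$
d_m(\bW^K_q)_p \;\le\; \inf_{K_m\in\Sigma_m(\Pi)}\Bigl\|\,\|(K-K_m)(\bx,\cdot)\|_{q'}\,\Bigr\|_{p},
$$
i.e.\ through best $m$-term \emph{bilinear} approximation of $K$ in the mixed $(L_{q'}(\by),L_p(\bx))$ norm; the hyperbolic-cross, block-by-block construction you describe only produces an upper estimate for that quantity. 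But this reduction is lossy. By Theorem \ref{InT5} (with the $\by$-variable playing the role of the first one, $q_1=1$, $p_1=q'$), the best one can possibly extract from it, uniformly over $K\in\bF^\br_1$, is $m^{-r_1-r_2+\xi(1,q')}=m^{-r_1-r_2+\min(1/2,1/q)}$, and $\min(1/2,1/q)>\xi(q,p)$ in every case except $q=1$, $p\ge 2$. A concrete failure: for $q=p=2$ the theorem claims $n^{-r_1-r_2}$, whereas the right-hand side of your H\"older bound equals $(\sum_{j>n}s_j^2)^{1/2}$, $s_j$ being the singular values of $I_K$; for the extremal kernels, with $s_j\asymp j^{-r_1-r_2}$, this is $\asymp n^{-r_1-r_2+1/2}$, off by $n^{1/2}$, while $d_n(\bW^K_2)_2=s_{n+1}\asymp n^{-r_1-r_2}$. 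The reason is structural: the bilinear reduction forces the coefficients of the approximant of $f=I_K(\ff)$ to be the fixed linear functionals $\int v_i\ff\,d\mu_2$, whereas the Kolmogorov width permits best approximation from the subspace. The term $\max(1/2,1/p)$ in $\xi(q,p)$ is precisely the gain from that freedom, and obtaining it requires singular-value arguments and finite-dimensional width estimates of Kashin--Gluskin type for $d_n(B_q^N,\ell_p^N)$ applied to the images of the dyadic blocks --- tools you invoke only in your lower bound, but which are indispensable in the upper bound as well.

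Your lower bound, by contrast, is essentially the standard argument. The ``diagonal matching'' kernel you build is, up to normalization and truncation, the convolution Bernoulli kernel $F_{r_1+r_2}(x-y)$, which is exactly what the paper uses in the lower bounds for Theorem \ref{InT2}: $F_{r_1+r_2}(x-y)$ lies in the closure of $\bF^\br_1$, so $\bW^{K}_q$ contains (the closure of) the univariate class $W^{r_1+r_2}_q$ and the classical one-dimensional width lower bounds apply. Your remark that ``the Bernoulli kernel is useless'' is true only of the tensor product $F_{r_1}(x)F_{r_2}(y)$, not of the convolution kernel; using the latter also disposes of the logarithmic loss you worry about for truncated diagonal Dirichlet-type sums.
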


 In this paper we study the case, when the asymptotic characteristic is the error of sampling recovery and the collection $\bK$ is a class of multivariate periodic functions on $2d$ variables with mixed smoothness. Recall the setting 
 of the optimal linear recovery introduced in \cite{VT51}. For a fixed $m$ and a set of points  $\xi:=\{\xi^j\}_{j=1}^m\subset \Omega$, let $\Phi $ be a linear operator from $\bbC^m$ into $L_p(\Omega,\mu)$.
Denote for a class $\bF$ (usually, centrally symmetric and compact subset of $L_p(\Omega,\mu)$)
$$
\varrho_m(\bF,L_p) := \inf_{\xi} \inf_{\text{linear}\, \Phi } \sup_{f\in \bF} \|f-\Phi(f(\xi^1),\dots,f(\xi^m))\|_p.
$$
The above described recovery procedure is a linear procedure. 

We now formulate the main results of this paper. The corresponding proofs are given in Section \ref{Pr}. 

 \begin{Theorem}\label{InT2} Let $d=1$ and $\bF^\br_1$ denote one of the classes $\bW^\br_1$ or $\bH^\br_1$ (see the definition in Section \ref{fc} below) of functions of two variables.  Then for $1\le q \le  2\le p\le \infty$, and $\br > (1,1+ 1/q)$   we have 
 $$
\sup_{K\in \bF^\br_1}  \ro_{m}(\bW^K_q,L_p) \asymp  m^{-r_1-r_2 +1/q-1/p}.
 $$
 \end{Theorem}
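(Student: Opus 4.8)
The plan is to prove the matching bounds $\ll$ and $\gg$ separately, starting from an exact reformulation of $\ro_m(\bW^K_q,L_p)$ as an operator-approximation problem -- the ``adaptive dictionary'' connection. Fix $K$, nodes $\xi=\{\xi^j\}_{j=1}^m\subset\Omega^1$, and a linear recovery map of the general form $\Phi(z_1,\dots,z_m)=\sum_j z_j a_j$ with $a_j\in L_p(\Omega^1)$. Since every $f\in\bW^K_q$ is $f=I_K\varphi$ with $\|\varphi\|_q\le1$ and $f(\xi^j)=\int_{\Omega^2}K(\xi^j,\by)\varphi(\by)\,d\mu_2$, one gets the exact identity
\[
\ro_m(\bW^K_q,L_p)=\inf_{\xi,\{a_j\}}\ \sup_{\|\varphi\|_q\le1}\Big\|\int_{\Omega^2}\big(K(\cdot,\by)-\textstyle\sum_j a_j(\cdot)K(\xi^j,\by)\big)\varphi(\by)\,d\mu_2\Big\|_p .
\]
In other words, $\ro_m(\bW^K_q,L_p)$ is the best $L_q\to L_p$ operator-norm approximation of $I_K$ by rank-$m$ operators whose analysis functionals are forced to be the point samples $f\mapsto f(\xi^j)$ and whose synthesis elements are the sampled sections $K(\xi^j,\cdot)$; this is precisely sparse approximation in the dictionary $\{K(\xi,\cdot):\xi\in\Omega^1\}$ attached to $K$. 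The hypotheses $r_1>1$ and $r_2>1+1/q$ are exactly what makes this meaningful on $\bF^\br_1$: $r_1>1$ gives continuity in $\bx$ (so point samples in $\bx$ are bounded), while $r_2>1/q$ embeds the $\by$-sections into $L_{q'}(\Omega^2)$ (so $I_K:L_q\to L_p$ is bounded), the extra smoothness being used in the construction below.

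For the upper bound I would estimate $\sup_{K\in\bF^\br_1}$ of the right-hand side by an explicit sampling scheme. Decompose $K=\sum_{\bn}\delta_{\bn}(K)$ into dyadic blocks indexed by $\bn=(n_1,n_2)$, with the mixed-smoothness bound $\|\delta_{\bn}(K)\|\ll 2^{-n_1 r_1-n_2 r_2}$. The decisive feature is that a single point sample in $\bx$ returns an \emph{entire} $\by$-section $K(\xi^j,\cdot)$; thus spending $\approx 2^{n_1}$ nodes in $\bx$ (at the resolution of the block) resolves the $\bx$-direction to order $r_1$ while the whole $\by$-content of the block -- and hence its smoothness $r_2$, carried by the factor $2^{-n_2 r_2}$ -- is captured for free. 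Allocating the budget of $m$ nodes across the active blocks in a Smolyak/hyperbolic-cross fashion and summing the block contributions, the $\bx$-sampling produces $r_1$, the free $\by$-sections produce $r_2$, and the passage $L_q(\by)\to L_p(\bx)$ produces the exponent $1/q-1/p$, giving $\ll m^{-r_1-r_2+1/q-1/p}$ uniformly in $K$. (The single-variable nature of the sampling in $\bx$ is what keeps this free of logarithmic factors.)

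For the lower bound I would exhibit one extremal kernel and fool every recovery scheme. Set $2^n\asymp m$ (with $2^n>m$) and take the diagonal kernel $K^*(\bx,\by)=H\sum_{i=1}^{2^n}\beta(2^n(\bx-\bx_i))\gamma(2^n(\by-\by_i))$ supported on $2^n$ disjoint cells $I_i\times J_i$, with $H\asymp 2^{-n(r_1+r_2-1)}$ chosen so that $K^*\in\bF^\br_1$. Here one must remember that the nodes are chosen knowing $K^*$, so a single hard function can always be sampled away; this forces $K^*$ to carry more than $m$ ``independent'' columns. Given any $\xi^1,\dots,\xi^m$, at most $m$ of the $\approx 2^n$ columns $I_i$ contain a node, so some column $I_{i_0}$ is node-free; choosing $\varphi$ of unit $L_q$-norm, supported on $J_{i_0}$ and aligned with $\gamma(2^n(\cdot-\by_{i_0}))$, the function $g:=I_{K^*}\varphi$ is a single $\bx$-bump on $I_{i_0}$ with $g(\xi^j)=0$ for all $j$. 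Since $\pm g\in\bW^{K^*}_q$ share the zero data, $\ro_m(\bW^{K^*}_q,L_p)\ge\|g\|_p$. A direct computation gives height $\asymp H\,2^{-n/q'}\asymp 2^{-n(r_1+r_2-1/q)}$ (the $2^{-n/q'}$ coming from the $L_q$--$L_{q'}$ pairing against the $\by$-bump) and hence $\|g\|_p\asymp 2^{-n(r_1+r_2-1/q)}2^{-n/p}\asymp m^{-r_1-r_2+1/q-1/p}$, matching the upper bound.

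The routine parts are the reduction and the $\pm g$ fooling principle. The genuine work is on the upper bound: one must realize the sharp rate with an honest point-evaluation scheme that is uniform over the entire kernel class and log-free, balancing the node counts over dyadic blocks so that the $r_1$-, $r_2$-, and $1/q-1/p$-contributions combine exactly; I expect this to be the main obstacle. A secondary subtlety, easy to overlook, lives in the lower bound: because $\ro_m\ge d_m$ only yields the weaker width exponent $1/q-1/2$, and because the nodes may be placed with full knowledge of the kernel, the extremal $K^*$ must be genuinely high-rank (many more columns than samples), and it is the concentration of $g$ into a single node-free column -- not any spreading -- that produces the sampling exponent $1/q-1/p$.
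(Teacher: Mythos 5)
Your lower bound is sound and is in the same spirit as the paper's: the paper either specializes Theorem \ref{LbT1} (proved via the de la Vall\'ee Poussin kernel $K(x,y)=\cV_n(x-y)$, renormalized by the inverse Bernoulli multipliers, together with the fooling-type Lemma \ref{qpL1} for $\varrho_m^o$ on trigonometric polynomial balls), or simply observes that $F_{r_1+r_2}(x-y)$ lies in the closure of $\bW^\br_1$, so that $\sup_K \ro_m(\bW^K_q,L_p)\ge \ro_m(W^{r_1+r_2}_q,L_p)\gg m^{-r_1-r_2+1/q-1/p}$ by the classical univariate result (Theorem \ref{univarR}). Your diagonal bump kernel is a legitimate variant of this (modulo checking that the bumps actually lie in $\bW^\br_1$ rather than only in $\bH^\br_1$), and your exponent bookkeeping checks out.

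The upper bound, however, has a genuine gap, and it is exactly at the step you flag as ``the main obstacle.'' The Smolyak-style allocation of $\approx 2^{n_1}$ nodes to the dyadic block $\delta_{\bn}(K)$ cannot be implemented: the data are values $f(\xi^j)=\int K(\xi^j,\by)\ff(\by)\,d\mu_2$, so every admissible approximant has the form $\sum_j K(\xi^j,\by)\psi_j(\bx)$, i.e.\ it is $P^xK$ for a rank-$m$ operator $P$ acting in the $\bx$-variable alone whose analysis functionals are point evaluations. Such an operator cannot be telescoped in the $\by$-variable, so an interpolation-type choice of $P$ only exploits $r_1$ (the $\by$-smoothness contributes a convergent constant $\sum_{n_2}2^{-n_2r_2}$, not the factor $2^{-n_2r_2}$ per block), and no explicit log-free sparse-grid scheme achieving $m^{-r_1-r_2+1/q-1/p}$ is known. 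Indeed, the ``$\by$-content is captured for free'' intuition is contradicted by the paper's own Theorems \ref{InT4b} and \ref{InT5}: forcing the $\by$-factors to be sections of $K$ costs a factor $m^{1/2}$ relative to free bilinear approximation. The paper's actual proof is non-constructive: it takes the sharp Kolmogorov width bound $\sup_K d_n(\bW^K_q)_\infty \asymp n^{-r_1-r_2+1/q-1/2}$ from the bilinear approximation results of \cite{VT32} (Theorem \ref{BiT5}) and feeds it into the general inequalities $\ro_{bn}(\bF,L_2)\le B\,d_n(\bF,L_\infty)$ and $\ro_{4n}(\bF,L_p)\le Cn^{1/2-1/p}d_n(\bF,L_\infty)$ (Theorems \ref{VT183T1} and \ref{KPUU2}), whose proofs rest on random subsampling; the factor $n^{1/2-1/p}$ converts $1/q-1/2$ into $1/q-1/p$. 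Without either these inequalities or a genuinely new constructive idea, your upper bound does not close. (A minor additional point: your ``exact identity'' for $\ro_m$ is only an inequality $\le$ for $p<\infty$, cf.\ Proposition \ref{RNP1}; equality is available only in the $L_\infty$ case with the appropriate order of mixed norms, cf.\ Proposition \ref{RNP2}.)
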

 
 We introduce some more notation. 
  For $\bq =(q_1,q_2)$,  $1\le q_1,q_2 \le \infty$,  denote
 $$
 \br(\bq) := \begin{cases}  
 (1/q_1,1/q_2), & 2\le q_1 \le \infty,  \\
 (1/q_1,\max(1/2,1/q_2)), & 1\le q_1<2 .
 \end{cases}
 $$
 
\begin{Theorem} \label{InT3} Let $d=1$ and $\bF^\br_\bq$ denote one of the classes $\bW^\br_\bq$ or $\bH^\br_\bq$ (see the definition in Section \ref{fc} below) of functions of two variables.  Then for   $1\le q_1,q_2 \le \infty$, $2\le p\le \infty$,   and $\br > \br(\bq)$   we have 
 $$
\sup_{K\in \bF^\br_\bq}  \ro_m(\bW^K_1,L_p) \ll m^{-r_1-r_2 + (1/q_1-1/2)_+ +1/2-1/p } .
 $$
 \end{Theorem}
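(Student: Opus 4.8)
The plan is to use that the sampling points and the linear reconstruction may be chosen depending on $K$, and to reduce the recovery of $f\in\bW^K_1$ to the recovery of a trigonometric polynomial of dimension $\asymp m$ determined by $K$. Writing $f=I_K\varphi$ with $\|\varphi\|_{L_1}\le1$, the samples $f(\xi^i)=\int K(\xi^i,y)\varphi(y)\,d\mu_2$ are linear functionals of $\varphi$, and for any kernel $G$ one has the identity $\sup_{\|\varphi\|_1\le1}\|I_G\varphi\|_p=\sup_y\|G(\cdot,y)\|_p=\|G\|_{(p,\infty)}$. Thus the $L_1$-constraint converts every error estimate for the operator $I_K$ into an estimate for the mixed norm $\|\cdot\|_{(p,\infty)}$ (norm $L_p$ in $x$, then $L_\infty$ in $y$) of the associated kernel, and the whole argument is driven by approximation of $K$ in this mixed norm. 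The condition $\br>\br(\bq)$ is precisely what guarantees the embeddings $\bF^\br_\bq\hookrightarrow C(\bbT^2)$ (needed for point evaluation in $x$) and $\bF^\br_\bq\hookrightarrow L_p(x)L_\infty(y)$ that make the scheme admissible.

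First I would construct, for a given $K$, a hyperbolic-cross approximant $\tilde K(x,y)=\sum_{i=1}^{n}u_i(x)v_i(y)$, where the $u_i$ span a fixed (depending only on $K$) trigonometric subspace $\cT$ with $\dim\cT\asymp n$ adapted to the mixed smoothness $\br$. Using the mixed-smoothness approximation theory for $\bW^\br_\bq$ and $\bH^\br_\bq$, this truncation can be arranged so that $\|K-\tilde K\|_{(p,\infty)}\ll n^{-r_1-r_2+(1/q_1-1/2)_++1/2-1/p}$: the factor $n^{-r_1-r_2}$ is the genuine mixed-smoothness rate, the Nikol'skii-type passage from the native integrability $q_1$ in $x$ produces the correction $(1/q_1-1/2)_+$, and the integrability $q_2$ in $y$ enters only through the threshold $\br>\br(\bq)$ that permits control of the $L_\infty(y)$-norm by $r_2$. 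Setting $\tilde f:=I_{\tilde K}\varphi\in\cT$, the pairing identity above gives $\|f-\tilde f\|_p\le\|K-\tilde K\|_{(p,\infty)}$, which already lies below the target bound, uniformly over $\|\varphi\|_1\le1$ and over $K\in\bF^\br_\bq$.

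Second I would recover the $\cT$-component. I would select $m\asymp n$ nodes $\xi^1,\dots,\xi^m$ together with a linear operator $\Phi$ reconstructing every element of $\cT$ from its samples with controlled operator norm, a Marcinkiewicz--Zygmund/least-squares discretization of $\cT$ underlying the construction. Since $\tilde f\in\cT$, one has $\Phi(f(\xi))=\tilde f+\Phi\big((f-\tilde f)(\xi)\big)$, so it remains to bound the aliasing term $\Phi\big((f-\tilde f)(\xi)\big)$. As it lies in $\cT$, I would estimate it in $L_2$ and then invoke the Nikol'skii inequality $\|t\|_p\ll(\dim\cT)^{1/2-1/p}\|t\|_2$, valid for $t\in\cT$ and $2\le p\le\infty$; combined with the $L_2$-level discretization this yields exactly the factor $m^{1/2-1/p}$ multiplying the $L_2$-rate $m^{-r_1-r_2+(1/q_1-1/2)_+}$, i.e. the asserted exponent. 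Adding the two contributions with $n\asymp m$ completes the upper bound.

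The main obstacle is the control of the aliasing term, namely of the sampling operator applied to the high-frequency tail $f-\tilde f$. Bounding $\|\Phi((f-\tilde f)(\xi))\|_2$ cannot be achieved merely through $\|f-\tilde f\|_2$; it couples $\|\Phi\|$ with the values of $f-\tilde f$ at the chosen nodes, which a priori seems to require an $L_\infty(x)$-type estimate on $K-\tilde K$ more expensive than the $L_2(x)$ or $L_p(x)$ estimates above. Keeping this term at the $L_2$-width level, so that no extra power of $m$ is lost, is exactly the delicate point of modern sampling theory: it forces either a Marcinkiewicz--Zygmund inequality on $\cT$ that also controls the operator on the orthogonal complement, or a subsampling/weighted-least-squares choice of nodes of the type used in the recent sampling-numbers versus Kolmogorov-widths results. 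Verifying that for the specific hyperbolic-cross spaces $\cT$ attached to $\bW^\br_\bq$ and $\bH^\br_\bq$ one may take $m\asymp n$ nodes (up to logarithmic factors absorbed by $\br>\br(\bq)$) with a bounded reconstruction operator is where the real work lies.
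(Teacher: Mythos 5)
Your high-level architecture is the right one and in fact mirrors the mechanism behind the paper's (much shorter) proof: the paper obtains Theorem \ref{InT3} by combining the known width bound $\sup_{K}d_n(\bW^K_1)_\infty \ll n^{-r_1-r_2+(1/q_1-1/2)_+}$ (Theorem \ref{BiT5a} with $p=\infty$) with the general inequalities $\ro_{bn}(\bF,L_2)\le B\,d_n(\bF,L_\infty)$ and $\ro_{4n}(\bF,L_p)\le Cn^{1/2-1/p}d_n(\bF,L_\infty)$ of Theorems \ref{VT183T1} and \ref{KPUU2} --- precisely the ``subsampling/weighted least squares'' results you invoke at the very end. So the step you flag as ``where the real work lies'' need not be redone; it is available as a black box, and crucially it holds for \emph{arbitrary} $n$-dimensional subspaces, not only for trigonometric ones.

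That last point is where your concrete instantiation has a genuine gap, and it costs a factor $m^{1/2}$ exactly in the interesting range $1\le q_1<2$ (in particular $q_1=1$, the case needed for Theorem \ref{InT2}). First, the main rate $n^{-r_1-r_2}$ is a \emph{bilinear} approximation rate (cf.\ Theorem \ref{InT5}); a frequency truncation of $K$ in the $x$ variable alone only sees $r_1$, and a joint hyperbolic-cross truncation of $K(x,y)$ gives a $\min(r_1,r_2)$-type exponent in the dimension, so ``hyperbolic-cross approximant'' as stated does not produce your claimed bound on $\|K-\tilde K\|_{(p,\infty)}$. Second, and more seriously, your aliasing term is driven by the values of $f-\tilde f$ at the nodes, hence by an $L_\infty(x)$ bound on $K-\tilde K$, and you need that bound at the Kolmogorov-width level $n^{-r_1-r_2+(1/q_1-1/2)_+}$. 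For $q_1<2$ the improvement from $1/q_1$ to $1/q_1-1/2$ in the uniform norm is the Kashin--Gluskin phenomenon (the same one behind $d_n(W^r_1,L_\infty)\asymp n^{-r+1/2}$ versus $E_n(W^r_1)_\infty\asymp n^{-r+1}$), and it is \emph{not} realized by subspaces spanned by exponentials. With a trigonometric $\cT$ the best available uniform residual is of order $n^{-r_1-r_2+1/q_1}$, and your own estimate $\|\Phi((f-\tilde f)(\xi))\|_p\ll n^{1/2-1/p}\|f-\tilde f\|_\infty$ then yields $n^{-r_1-r_2+1/q_1+1/2-1/p}$ instead of the target $n^{-r_1-r_2+1/q_1-1/p}$. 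The fix is to take $\cT$ to be the (non-trigonometric) subspace realizing $d_n(\bW^K_1,L_\infty)$ and to use the general-subspace sampling theorems for the discretization --- at which point the argument collapses into the paper's two citations, so the Marcinkiewicz--Zygmund/Nikol'skii framing for hyperbolic crosses should be dropped.
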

 
 We now give the lower bounds for the following characteristic of the optimal sampling recovery. Let $\Omega$ be a compact subset of $\bbR^d$ with the probability measure $\mu$ on it.
For a function class $W\subset \cC(\Omega)$,  we  define (see \cite{TWW})
$$
\varrho_m^o(W,L_p) := \inf_{\xi } \inf_{\cM} \sup_{f\in W}\|f-\cM(f(\xi^1),\dots,f(\xi^m))\|_p,
$$
where $\cM$ ranges over all  mappings $\cM : \bbC^m \to   L_p(\Omega,\mu)$  and
$\xi$ ranges over all subsets $\{\xi^1,\cdots,\xi^m\}$ of $m$ points in $\Og$.
Here, we use the index {\it o} to mean optimality. 
Clearly,
\be\label{In2}
\varrho_m^o(W,L_p) \le \varrho_m(W,L_p)
\ee
and, therefore, the lower bounds for the $\varrho_m^o$ serve as the lower bounds for the $\varrho_m$ and the upper bounds for the $\varrho_m$ serve as the upper bounds for the $\varrho_m^o$.
 
\begin{Theorem} \label{LbT1} Let $d=1$ and $\bF^\br_\bq$ denote one of the classes $\bW^\br_\bq$ or $\bH^\br_\bq$ (see the definition in Section \ref{fc}) of functions of two variables.  Then for \newline  $1\le q_1,q_2 \le \infty$, $1\le q\le p\le \infty$,   and $\br > (1/q_1,1/q_2)$   we have 
 $$
\sup_{K\in \bF^\br_\bq}  \ro_m^o(\bW^K_q,L_p) \gg m^{-r_1-r_2 -1 +1/q_1 +1/q -1/p } .
 $$
 \end{Theorem}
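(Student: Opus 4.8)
The plan is to produce, for each $m$, a single kernel $K=K_m\in\bF^\br_\bq$ for which the optimal recovery error is large, and then to invoke $\sup_{K\in\bF^\br_\bq}\ro_m^o(\bW^K_q,L_p)\ge\ro_m^o(\bW^{K_m}_q,L_p)$. Since the supremum over $K$ stands outside the recovery quantity, $K_m$ is allowed to depend on $m$, which is essential. The driving mechanism is the standard fooling argument valid for arbitrary (possibly nonlinear) mappings $\cM$: because $\bW^K_q$ is the image of a ball under a linear operator, it is centrally symmetric, so if $g\in\bW^K_q$ vanishes at all nodes $\xi^1,\dots,\xi^m$, then for any $\cM$ one has $\sup_{f\in\bW^K_q}\|f-\cM(f(\xi^1),\dots,f(\xi^m))\|_p\ge\|g\|_p$, since $g$ and $-g$ produce identical (zero) data. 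Hence it suffices to exhibit, for every configuration of $m$ nodes, a function $g\in\bW^{K_m}_q$ that vanishes at all of them and satisfies $\|g\|_p\gg m^{-r_1-r_2-1+1/q_1+1/q-1/p}$, with implied constant independent of the node set.

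To build such a kernel I would use a diagonal array of localized bumps. Fix a $C^\infty$ periodic bump $\psi$ supported in a short interval, set $M:=2m$ and $\delta:=1/M$, write $\psi_i(t):=\psi((t-i\delta)/\delta)$ for the $M$ disjointly supported translates, and put $K_m(x,y):=c\sum_{i}\psi_i(x)\psi_i(y)$. A direct computation of the mixed-smoothness norm, using disjointness of supports and $M\delta\asymp1$, gives $\|K_m\|_{\bW^\br_\bq}\asymp\|K_m\|_{\bH^\br_\bq}\asymp c\,\delta^{-r_1-r_2+1/q_1}=c\,M^{r_1+r_2-1/q_1}$; the parameter $q_2$ drops out precisely because the diagonal bumps tile the $y$-axis. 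Choosing $c\asymp M^{-r_1-r_2+1/q_1}$ places $K_m$ in $\bF^\br_\bq$. The point of the diagonal structure is that $I_{K_m}\varphi(x)=c\sum_i b_i\psi_i(x)$ with $b_i:=\int\psi_i(y)\varphi(y)\,d\mu_2$, so $\bW^{K_m}_q$ contains every single-bump function $c\,b_{i_0}\psi_{i_0}$; by Hölder duality $b_{i_0}$ can be pushed up to $\|\psi_{i_0}\|_{q'}\asymp\delta^{1-1/q}=M^{1/q-1}$ while keeping $\|\varphi\|_q\le1$, by taking $\varphi$ supported in the $i_0$-th $y$-strip.

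Finally I would run the fooling argument. Given any $m$ nodes, at most $m$ of the $M=2m$ $x$-strips contain a node, so some strip $i_0$ is empty; let $\varphi$ be the extremal density above and set $g:=I_{K_m}\varphi=c\,b_{i_0}\psi_{i_0}$. Since $\psi_{i_0}$ is supported in the empty $x$-strip, $g$ vanishes at every node, and $\|g\|_p=c\,b_{i_0}\|\psi_{i_0}\|_p\asymp M^{-r_1-r_2+1/q_1}\cdot M^{1/q-1}\cdot M^{-1/p}=M^{-r_1-r_2-1+1/q_1+1/q-1/p}$, uniformly in the node set. Combining with the symmetry lower bound and $M\asymp m$ yields $\ro_m^o(\bW^{K_m}_q,L_p)\gg m^{-r_1-r_2-1+1/q_1+1/q-1/p}$, the assertion. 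The restriction $q\le p$ makes the single-spike choice the natural extremal configuration, and the hypothesis $\br>(1/q_1,1/q_2)$ is used only to guarantee $\bW^\br_\bq\hookrightarrow\cC(\bbT^2)$, so that the sample values are well defined. I expect the only genuinely technical point to be the norm computation certifying $K_m\in\bF^\br_\bq$ uniformly in $m$ for both the $\bW$- and $\bH$-scales; everything else is bookkeeping.
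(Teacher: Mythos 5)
Your argument is correct and reaches the stated exponent, but it follows a genuinely different route from the paper's. The paper takes the single convolution kernel $K(x,y)=\cV_n(x-y)$, which reproduces every $\phi\in\cT(n)$ under $I_K$, normalizes it into the class by an exact Fourier-multiplier computation (Bernstein's inequality together with $\|\cV_n\|_{q_1}\ll n^{1-1/q_1}$ gives $cn^{-(r_1+r_2+1-1/q_1)}K\in\bW^\br_{(q_1,\infty)}$), and then invokes Lemma \ref{qpL1} from \cite{VT203}, namely $\varrho_m^o(\cT(2\bN,d)_q,L_p)\gg\vartheta(\bN)^{1/q-1/p}$ for $m\le\vartheta(\bN)/2$; the product of the two factors is the claimed bound. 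You instead build a diagonal rank-$2m$ kernel out of disjointly supported smooth bumps and run the central-symmetry fooling argument from scratch: pigeonhole an empty strip, place a single spike of height $\|\psi_{i_0}\|_{q'}$ there, and evaluate its $L_p$ norm. The two proofs are morally cousins --- Lemma \ref{qpL1} is itself proved by a fooling argument, and your bump construction essentially re-derives its content in the one-dimensional case you need --- so what differs is the choice of kernel and the degree of self-containment. The paper's convolution kernel buys a clean normalization, since $\cV_n$ is a trigonometric polynomial and the fractional differentiation is an exact multiplier calculation; your kernel makes the fooling step completely transparent and elementary. The one place where your write-up understates the work is the membership $K_m\in\bW^\br_\bq$: for the $\bH$-scale the disjoint-support computation is indeed direct, but for the $\bW$-scale the fractional derivatives $D^{(r_j,r_j)}\psi_i$ are not compactly supported, and you must use the vanishing moments of $\psi_i^{(l)}$ to get the tail decay $\ll\delta|t|^{-1-r_j}$, summable over the $2m$ translates because $r_j>0$, before the bound $\|\varphi\|_{(q_1,q_2)}\ll c\,\delta^{1/q_1-r_1-r_2}$ and hence the normalization $c\asymp M^{-r_1-r_2+1/q_1}$ is justified. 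With that verification supplied, your proof is complete and yields the same $q_2$-independent lower bound as the paper's.
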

 Note that in the case $q_1=1$ Theorem \ref{LbT1} provides the required lower bounds in Theorem \ref{InT2}. 
In the particular case $q=1$ Theorem \ref{LbT1} gives
$$
\sup_{K\in \bF^\br_\bq}  \ro_m^o(\bW^K_1,L_p) \gg m^{-r_1-r_2  +1/q_1  -1/p } .
 $$
 This lower bound coincides with the upper bound in Theorem \ref{InT3} in the case $1\le q_1\le 2$. Therefore, 
 Theorem \ref{InT3} is sharp in the case $1\le q_1\le 2$. We formulate this as a separate statement. 
 
 \begin{Theorem} \label{LbT2} Let $d=1$ and $\bF^\br_\bq$ denote one of the classes $\bW^\br_\bq$ or $\bH^\br_\bq$ (see the definition in Section \ref{fc}) of functions of two variables.  Then for \newline  $1\le q_2\le\infty$, $1\le q_1\le2\le p\le \infty$,   and $\br > \br(\bq)$   we have 
 $$
  \sup_{K\in \bF^\br_\bq}  \ro_m^*(\bW^K_1,L_p) \asymp m^{-r_1-r_2 + 1/q_1 -1/p }, 
 $$
 where $\ro_m^*$ stands for both $\ro_m$ and $\ro_m^o$.
 \end{Theorem}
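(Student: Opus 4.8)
The plan is to obtain Theorem \ref{LbT2} not by any fresh analysis but as an immediate consequence of the upper bound already proved in Theorem \ref{InT3} and the lower bound already proved in Theorem \ref{LbT1}, glued together by the elementary comparison (\ref{In2}). The whole point is that the exponents furnished by those two theorems coincide exactly in the restricted regime $1\le q_1\le 2$, so the two-sided estimate falls out with no genuine work beyond checking arithmetic and parameter ranges.

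First I would extract the upper bound. Theorem \ref{InT3}, under its hypotheses $2\le p\le\infty$ and $\br>\br(\bq)$, gives
$$
\sup_{K\in \bF^\br_\bq} \ro_m(\bW^K_1,L_p) \ll m^{-r_1-r_2+(1/q_1-1/2)_++1/2-1/p}.
$$
Since Theorem \ref{LbT2} assumes $1\le q_1\le 2$, we have $1/q_1\ge 1/2$, so $(1/q_1-1/2)_+=1/q_1-1/2$ and the exponent collapses to $-r_1-r_2+1/q_1-1/p$. This is precisely the desired upper bound for $\ro_m$.

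Next I would extract the matching lower bound. As the remark following Theorem \ref{LbT1} records, specializing that theorem to $q=1$ yields
$$
\sup_{K\in \bF^\br_\bq} \ro_m^o(\bW^K_1,L_p) \gg m^{-r_1-r_2+1/q_1-1/p},
$$
under the hypothesis $\br>(1/q_1,1/q_2)$. I must confirm that this hypothesis is implied by the stronger assumption $\br>\br(\bq)$ of Theorem \ref{LbT2}: in both branches of the definition of $\br(\bq)$ the second coordinate equals $\max(1/2,1/q_2)\ge 1/q_2$, so $r_1>1/q_1$ and $r_2>1/q_2$ both hold, and the range condition $1\le q\le p$ is automatic for $q=1$.

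Finally I would chain the two estimates. For each fixed $K$ the inequality (\ref{In2}) gives $\ro_m^o(\bW^K_1,L_p)\le \ro_m(\bW^K_1,L_p)$, and taking the supremum over $K\in\bF^\br_\bq$ preserves this, whence
$$
m^{-r_1-r_2+1/q_1-1/p} \ll \sup_{K} \ro_m^o(\bW^K_1,L_p) \le \sup_{K} \ro_m(\bW^K_1,L_p) \ll m^{-r_1-r_2+1/q_1-1/p}.
$$
Both outer bounds carry the same exponent, so each of the two middle quantities is $\asymp m^{-r_1-r_2+1/q_1-1/p}$; since $\ro_m^*$ denotes either, this is the claimed statement. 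I do not expect a real obstacle here: the only delicate points are the compatibility of the parameter ranges just verified and the identity $(1/q_1-1/2)_++1/2=1/q_1$ valid on $q_1\le 2$, both of which are trivial. All the substance of the result resides in Theorems \ref{InT3} and \ref{LbT1}.
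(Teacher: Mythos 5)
Your proposal is correct and follows exactly the route the paper itself takes: the theorem is stated as an immediate corollary of the upper bound in Theorem \ref{InT3} and the $q=1$ case of the lower bound in Theorem \ref{LbT1}, linked by the inequality (\ref{In2}), with the exponents matching because $(1/q_1-1/2)_++1/2=1/q_1$ for $q_1\le 2$. Your additional verification that $\br>\br(\bq)$ implies $\br>(1/q_1,1/q_2)$ is a sensible check that the paper leaves implicit.
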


\subsection{Adaptive dictionaries}
\label{AD}

It turns out that Problem $K$ is closely related to the sparse approximation problem with respect to different redundant dictionaries. We now introduce some concepts from nonlinear sparse approximation and explain the connection. 

The first example of sparse approximation with respect to redundant dictionaries 
 was considered by E. Schmidt in \cite{Sch}, who studied the
approximation of functions $f(x,y)$ of two variables by bilinear forms,
$$
\sum_{i=1}^mu_i(x)v_i(y),
$$ 
in $L_2([0,1]^2)$. In this case we use the following dictionary (bilinear dictionary)
\be\label{Bi1}
\Pi :=  \{u(x)v(y)\,:\, u,v\in L_2([0,1])\},
\ee
where the   functions $u$ and $v$ are functions of a single variable. 
This problem is closely connected with properties of the integral operator
$$
(I_fg)(x) := \int_0^1 f(x,y)g(y) dy
$$
 with kernel $f(x,y)$. The reader can find a detailed discussion of this connection in \cite{VTbook}, Ch.2.

In a general setting we are working in a Banach space $X$ with a redundant system of elements $\cD$ (dictionary $\cD$).   
An element (function, signal) $f\in X$ is said to be $m$-sparse with respect to $\cD$ if
it has a representation $h=\sum_{i=1}^mc_ig_i$,   $g_i\in \cD$, $i=1,\dots,m$, where $\{c_i\}$ are real or complex numbers. The set of all $m$-sparse elements is denoted by $\Sigma_m(\cD)$. For a given element $f$ we introduce the error of best $m$-term approximation
$$
\sigma_m(f,\cD) := \inf_{h\in\Sigma_m(\cD)} \|f-h\|.
$$
We now make a comment on terminology. In the greedy approximation literature we define a dictionary $\cD$ as a system $\{g\}$  of elements $g\in X$ with the following two properties
$$
\|g\|\le 1 \quad\text{for all} \quad g\in X \quad \text{and the closure of}\, \sp(\cD) =X.
$$
The normalization condition $\|g\|\le 1$ is imposed for convenience. Clearly, the characteristic $\sigma_m(f,\cD)$ does not depend on normalisation. In this paper we mostly use this characteristic. 
Let us discuss the second condition. Suppose that a system $\cS\subset X$ does not satisfy this condition. Then, instead of the Banach space $X$ we consider a subspace $X_\cS$ of $X$, which is the closure (in $X$) of $\sp(\cS)$. This makes the system $\cS$ to be a dictionary in the Banach space $X_\cS$. For this reason, we sometimes with a little abuse of exactness freely use both terms {\it system} and {\it dictionary} for a general system. In the greedy approximation theory there are theorems, which guarantee convergence of certain greedy algorithms with respect to any dictionary $\cD$ for any element $f\in X$. Clearly, in the case, when we deal with a system, we can only apply those theorem to $f\in X_\cS$.   

We stress that the bilinear dictionary $\Pi$ does not depend on a function under approximation. In this sense it is not adaptive -- we use it for approximation of all functions. It turns out that in some versions of the Problem $K$ we need to study approximation of a given kernel $K(\bx,\by)$ with respect to a dictionary, which is determined by $K$. We now give a more general definition of the bilinear dictionary  (system) and define three adaptive systems. Let $\bp=(p_1,p_2)$, $1\le p_1,p_2 \le \infty$ be given.

{\bf Bilinear dictionary $\Pi(\bp)$.}   Define
$$
\Pi(\bp) :=\{g: g(\bx,\by) = u(\bx)v(\by),\, u\in L_{p_1}(\Omega^1),\, v\in L_{p_2}(\Omega^2) \}.
$$

 $\cL\cK(\bp)$-{\bf system.} Assume that $K \in L_\bp(\Omega^1\times \Omega^2)$ satisfies the following property. 
For any $\bz \in \Omega^1$ we have $K(\bz,\cdot) \in L_{p_2}(\Omega^2)$. Define
$$
\cL\cK(\bp) :=\{g: g(\bx,\by) = u(\bx,\bz)K(\bz,\by),\,  \forall \bz \,\text{we have}\, u(\cdot,\bz) \in L_{p_1}(\Omega^1)  \}.
$$

 $\cK\cL(\bp)$-{\bf system.} Assume that $K \in L_\bp(\Omega^1\times \Omega^2)$ satisfies the following property. 
For any $\bz \in \Omega^2$ we have $K(\cdot,\bz) \in L_{p_1}(\Omega^1)$. Define
$$
\cK\cL(\bp) :=\{g: g(\bx,\by) = K(\bx,\bz)v(\bz,\by),\,  \forall \bz \,\text{we have}\, v(\bz,\cdot) \in L_{p_1}(\Omega^1)  \}.
$$

$\cK\cK(\bp)$-{\bf system.} Assume that $K \in L_\bp(\Omega^1\times \Omega^2)$ satisfies the following property. 
For any $\ba \in \Omega^1$ we have $K(\ba,\cdot) \in L_{p_2}(\Omega^2)$ and for any $\bb \in \Omega^2$ we have $K(\cdot,\bb) \in L_{p_1}(\Omega^1)$. Define
$$
\cK\cK(\bp) :=\{g: g(\bx,\by) = K(\bx,\bb)K(\ba,\by), \quad (\ba,\bb)\in  \Omega^1\times \Omega^2 \}.
$$
Note that in the literature (see \cite{BS}) the functions $K_{ab}(x,y) := K(x,b)K(a,y)$, $(x,y),(a,b) \in [0,1]^2$ are called {\it cross-functions} of function $K(x,y)$ and the system $\cK\cK(\infty)$ is called the {\it system of cross-functions}. 

It is known (see \cite{VT32}) that the problem of estimating the Kolmogorov widths of classes $\bW^K_q$ (in the case $d=1$), which can be seen as a linear problem, is closely connected with the nonlinear problem of best sparse approximation of the kernel $K$ with respect to the bilinear dictionary $\Pi$. Note that the bilinear dictionary $\Pi$ does not depend on the kernel $K$. 
 
 In this paper (see Section \ref{Pr}) we demonstrate that the problem of estimating the optimal errors of linear recovery of classes $\bW^K_q$ is connected with the nonlinear problem of best sparse approximation of the kernel $K$ with respect to the system (dictionary) $\cL\cK$. Here, the system $\cL\cK$ is determined by the kernel $K$. Thus, we call this type of approximation -- {\it sparse approximation with respect to adaptive systems (dictionaries)}.  
 
 It turns out that the problem of sparse approximation with respect to adaptive systems is not new and has an interesting history, which we now briefly discuss (for more details see \cite{BS} and for another example see Section \ref{D}). It concerns approximation of a function $K(x,y)$ of two variables by linear combinations of its {\it cross-functions} -- sparse approximation with respect to the system $\cK\cK$. Let $K(x,y)$ be a continuous function on $[0,1]^2$. For a point 
 $(a,b)\in [0,1]^2$ we define $K_{ab}(x,y) := K(a,y)K(x,b)$ and call it a cross-function of function $K(x,y)$. 
 
 In 1936 Mazur formulated the following problem (see \cite{[1]}, Problem 153). (M) Is it true that every continuous on $[0,1]^2$ function $K$ can be arbitrarily well approximated in the uniform norm by linear combinations of its cross-functions $\{K_{ab}, (a,b)\in [0,1]^2\}$?
 
 In 1955  Grothendieck (see \cite{[2]}) showed that the Mazur's problem (M) is related to other fundamental problems. Namely, he proved equivalence of the following three statements. 
 
 {\bf G1.} Each Banach space $E$ has the approximation property, which means that for any compact set $S\subset E$ and any $\epsilon >0$ there exists a finite dimensional operator $T$ such that $\|Tx-x\|<\epsilon$ for all $x\in S$. 
 
 {\bf G2.} For any infinite matrix $U:= \left[u_{kj}\right]_{k,j=1}^\infty$ with the properties 
 $$
 \forall{k}\quad \lim_{j\to\infty} u_{kj} =0; \qquad \sum_{k=1}^\infty \max_j |u_{kj}| <\infty,\qquad U^2=0
 $$
 we have
 $$
 \text{tr}(U) := \sum_{k=1}^\infty u_{kk} =0.
 $$
 
 {\bf G3.} For any continuous on $[0,1]^2$ function $K$ the equalities 
 $$
 \int_0^1 K_{ab}(t,t)dt =0, \quad \text{for all} \quad (a,b) \in [0,1]^2
 $$
 imply that
 $$
  \int_0^1 K(t,t)dt =0.
  $$
  
  In 1973 Enflo (see \cite{[3]}) gave an example of a Banach space without the approximation property and, therefore, by the Grothendieck's result on equivalence of {\bf G1} and {\bf G3} the answer to the Mazur's problem (M) is negative. We refer the reader to the very recent paper \cite{BS} for results on approximation of functions by linear combinations of their cross-functions. 
  
In this paper we prove the following corollary of Theorem \ref{InT3}.

\begin{Theorem} \label{InT4} Let $d=1$ and $\bF^\br_\bq$ denote one of the classes $\bW^\br_\bq$ or $\bH^\br_\bq$ (see the definition in Section \ref{fc} below) of functions of two variables.  Then for   $1\le q_1,q_2 \le \infty$, $2\le p\le \infty$,   and $\br > \br(\bq)$   we have 
 $$
\sup_{K\in \bF^\br_\bq}  \sigma_m(K,\cL\cK(\infty))_\infty \ll m^{-r_1-r_2 + (1/q_1-1/2)_+ +1/2 } .
 $$
 \end{Theorem}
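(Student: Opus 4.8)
The statement is the endpoint case $q=1$, $p=\infty$ of Theorem \ref{InT3}, and the plan is to make this precise through the exact identity between the optimal linear recovery error and best $m$-term approximation by the adaptive system $\cL\cK$ that was announced in Section \ref{AD}. Concretely, I would establish
$$
\sigma_m(K,\cL\cK(\infty))_\infty=\ro_m(\bW^K_1,L_\infty)
$$
for every admissible kernel $K$, take the supremum over $K\in\bF^\br_\bq$, and read off the bound from Theorem \ref{InT3} at $p=\infty$, where the exponent $-r_1-r_2+(1/q_1-1/2)_++1/2-1/p$ becomes $-r_1-r_2+(1/q_1-1/2)_++1/2$.

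First I would unwind the definition of $\ro_m(\bW^K_1,L_\infty)$. A linear operator $\Phi\colon\bbC^m\to L_\infty(\Omega^1)$ is determined by its values $\psi_j:=\Phi(\bfe_j)\in L_\infty(\Omega^1)$ on the standard basis vectors, so every admissible reconstruction has the form $\Phi(f(\xi^1),\dots,f(\xi^m))(\bx)=\sum_{j=1}^m\psi_j(\bx)f(\xi^j)$. For $f\in\bW^K_1$ write $f(\bx)=\int_{\Omega^2}K(\bx,\by)\varphi(\by)\,d\mu_2$ with $\|\varphi\|_1\le1$; substituting $f(\xi^j)=\int_{\Omega^2}K(\xi^j,\by)\varphi(\by)\,d\mu_2$ yields
$$
f(\bx)-\sum_{j=1}^m\psi_j(\bx)f(\xi^j)=\int_{\Omega^2}\Big(K(\bx,\by)-\sum_{j=1}^m\psi_j(\bx)K(\xi^j,\by)\Big)\varphi(\by)\,d\mu_2 .
$$
Setting $g(\bx,\by):=\sum_{j=1}^m\psi_j(\bx)K(\xi^j,\by)$, each summand $\psi_j(\bx)K(\xi^j,\by)$ is by definition an element of $\cL\cK(\infty)$ (take $u=\psi_j$ and the parameter $\bz=\xi^j$), so $g\in\Sigma_m(\cL\cK(\infty))$.

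Next I would pass from the pointwise error to a norm via the duality $(L_1)^*=L_\infty$. Taking the supremum over $\|\varphi\|_1\le1$ and over $\bx$, and using that the two suprema commute,
$$
\sup_{f\in\bW^K_1}\Big\|f-\sum_{j}\psi_j f(\xi^j)\Big\|_\infty=\sup_{\bx}\big\|K(\bx,\cdot)-g(\bx,\cdot)\big\|_{L_\infty(\Omega^2)}=\|K-g\|_{L_\infty(\Omega^1\times\Omega^2)} .
$$
Thus every sampling scheme produces an $m$-term approximant $g\in\Sigma_m(\cL\cK(\infty))$ of $K$ whose approximation error equals the worst-case recovery error, and infimizing over $\{\xi^j\}$ and $\{\psi_j\}$ gives $\sigma_m(K,\cL\cK(\infty))_\infty\le\ro_m(\bW^K_1,L_\infty)$, which is the only direction needed. (Equality holds, since conversely any approximant $\sum_j u_j(\bx)K(\bz^j,\by)$ defines a recovery scheme with sampling points $\xi^j=\bz^j$ and reconstruction functions $\psi_j=u_j$ and the identical residual.) Taking the supremum over $K\in\bF^\br_\bq$ and applying Theorem \ref{InT3} with $p=\infty$ then delivers the claimed estimate.

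The steps above are essentially bookkeeping; the one point requiring care is the matching of the two classes over which one optimizes. The reconstruction functions $\psi_j$ range over $L_\infty(\Omega^1)=L_p(\Omega^1)$ at $p=\infty$, which is precisely the class $L_{p_1}$ with $p_1=\infty$ in the definition of $\cL\cK(\infty)$, so the correspondence between recovery schemes and $\cL\cK(\infty)$-approximants is exact and the normalization $\|g\|\le1$ built into the dictionary convention is irrelevant to $\sigma_m$. The genuine mathematical difficulty is entirely absorbed into Theorem \ref{InT3}, whose proof supplies the upper bound; the present statement is a translation of that bound into the language of sparse approximation with respect to the adaptive dictionary $\cL\cK(\infty)$.
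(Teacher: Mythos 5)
Your argument is correct and follows essentially the same route as the paper: the identity $\ro_m(\bW^K_1,L_\infty)=\sigma_m(K,\cL\cK(\infty))_\infty$ that you derive inline is exactly the paper's Proposition \ref{RNP2} specialized to $q=1$ (so $q'=\infty$), and the paper likewise concludes by applying Theorem \ref{InT3} with $p=\infty$. The only difference is presentational — you reprove the duality identity rather than citing it.
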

 
 Also, we prove the following lower bounds. 
 
 \begin{Theorem} \label{InT4a} Let $d=1$ and $\bF^\br_\bq$ denote one of the classes $\bW^\br_\bq$ or $\bH^\br_\bq$ (see the definition in Section \ref{fc} below) of functions of two variables.  Then for   $1\le q_1,q_2 \le \infty$, $1\le p_1, p_2\le \infty$, and $\br > \br(\bq)$    we have 
 $$
\sup_{K\in \bF^\br_\bq}  \sigma_m(K,\cL\cK(\bp))_\bp \gg m^{-r_1-r_2 + 1/q_1-1/p_1-1/p_2}.
 $$
 \end{Theorem}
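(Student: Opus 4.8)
The plan is to derive this lower bound by transferring it from the sampling-recovery lower bound of Theorem \ref{LbT1}, exploiting the same link between sparse approximation of a kernel with respect to $\cL\cK(\bp)$ and optimal linear sampling recovery of $\bW^K_q$ that underlies the upper bounds in Section \ref{Pr}. The parameters are matched by Hölder's inequality in the inner ($\by$) variable: the inner $L_{p_2}$-norm of the approximation error pairs with densities $\varphi\in L_{p_2'}$, so one recovers functions of the class $\bW^K_q$ with $q=p_2':=p_2/(p_2-1)$, while the outer $L_{p_1}$-norm in $\bx$ corresponds to measuring the recovery error in $L_{p_1}(\Omega^1)$.

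First I would prove, for every admissible kernel $K$, the transfer inequality $\ro_m(\bW^K_{p_2'},L_{p_1})\le\sigma_m(K,\cL\cK(\bp))_\bp$. Fix an $m$-term $\cL\cK(\bp)$-approximant $h(\bx,\by)=\sum_{i=1}^m a_i(\bx)K(\bz^i,\by)$, $a_i\in L_{p_1}(\Omega^1)$, with $\|K-h\|_\bp$ arbitrarily close to $\sigma_m(K,\cL\cK(\bp))_\bp$. Take the sample nodes $\xi^i:=\bz^i$ and the linear recovery operator $\Phi(c_1,\dots,c_m)(\bx):=\sum_{i=1}^m a_i(\bx)c_i$. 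For $f\in\bW^K_{p_2'}$, written $f(\bx)=\int_{\Omega^2}K(\bx,\by)\varphi(\by)\,d\mu_2$ with $\|\varphi\|_{p_2'}\le1$, the samples are $f(\xi^i)=\int_{\Omega^2}K(\bz^i,\by)\varphi(\by)\,d\mu_2$, whence
\be
f(\bx)-\Phi(f(\xi^1),\dots,f(\xi^m))(\bx)=\int_{\Omega^2}\bigl(K(\bx,\by)-h(\bx,\by)\bigr)\varphi(\by)\,d\mu_2 .
\ee
By Hölder in $\by$ we obtain $|f(\bx)-\Phi(\cdot)(\bx)|\le\|K(\bx,\cdot)-h(\bx,\cdot)\|_{p_2}$, and taking the $L_{p_1}$-norm in $\bx$ gives $\|f-\Phi(\cdot)\|_{p_1}\le\|K-h\|_\bp$. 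Passing to the supremum over $f$ and the infimum over $h$ yields the transfer inequality.

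Then I would combine this with the trivial bound (\ref{In2}), so that $\ro_m^o(\bW^K_{p_2'},L_{p_1})\le\ro_m(\bW^K_{p_2'},L_{p_1})\le\sigma_m(K,\cL\cK(\bp))_\bp$, take the supremum over $K\in\bF^\br_\bq$, and invoke Theorem \ref{LbT1} with $q=p_2'$ and $p=p_1$. Since $1/q=1-1/p_2$, its exponent becomes $-r_1-r_2-1+1/q_1+(1-1/p_2)-1/p_1=-r_1-r_2+1/q_1-1/p_1-1/p_2$, exactly the claimed rate; one also checks that $\br>\br(\bq)$ forces $\br>(1/q_1,1/q_2)$, so the hypothesis of Theorem \ref{LbT1} on $\br$ is satisfied.

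The main obstacle is the admissible range of parameters. Theorem \ref{LbT1} is stated for $1\le q\le p\le\infty$, i.e. for $p_2'\le p_1$, whereas Theorem \ref{InT4a} asserts the bound for all $1\le p_1,p_2\le\infty$. In the regime $p_2'\le p_1$ the reduction above is complete; for the complementary pairs one must either check that the hard-kernel construction underlying Theorem \ref{LbT1} remains valid without the restriction $q\le p$, or construct a single extremal $K\in\bF^\br_\bq$ directly. A convenient candidate for the latter is a sum of separated tensor bumps, $K(x,y)=\lambda\sum_{k}\phi(Nx-k)\psi(Ny-k)$ with $N\asymp m$: each row $K(z,\cdot)$ is then supported in a single $y$-cell, so the $\by$-sections of any $m$-term $\cL\cK(\bp)$-approximant lie in the span of at most $m$ cells while at least $N-m\gtrsim N$ cells contribute in full; with the normalization $\|K\|_{\bF^\br_\bq}\asymp\lambda N^{r_1+r_2-1/q_1}\asymp1$ this yields at least the asserted rate over the whole range. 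Verifying the membership $K\in\bF^\br_\bq$ with the correct power of $N$, and tracking the vector $L_\bp$-norm of the residual, is the only genuinely computational part.
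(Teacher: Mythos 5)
Your argument is essentially the paper's own route: the transfer inequality you prove is exactly Proposition \ref{RNP1} with $p=p_1$ and $q'=p_2$ (the paper states this connection precisely so that Theorem \ref{InT4a} follows from it together with (\ref{In2}) and Theorem \ref{LbT1} applied with $q=p_2'$, $p=p_1$), and your exponent bookkeeping and the check that $\br>\br(\bq)$ implies $\br>(1/q_1,1/q_2)$ are both correct. The parameter-range issue you flag is genuine: Theorem \ref{LbT1} (via Lemma \ref{qpL1}) is stated only for $q\le p$, i.e.\ for $1/p_1+1/p_2\le 1$, whereas Theorem \ref{InT4a} is asserted for all $1\le p_1,p_2\le\infty$; the paper gives no separate proof of Theorem \ref{InT4a} and does not address this corner, so your reduction covers exactly what the paper's stated tools cover. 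For the only place the lower bound is used (Theorem \ref{InT4b}, where $\bp=\infty$, hence $q=1\le p=\infty$) the restriction is vacuous; to get the full claimed range one would indeed need either to observe that the two-function argument behind Lemma \ref{qpL1} (a single normalized bump in $\cT(\bN,d)$ vanishing at the nodes, with $\|f\|_p/\|f\|_q\asymp\vartheta(\bN)^{1/q-1/p}$ for arbitrary $q,p$) does not actually require $q\le p$, or to carry out a direct construction such as the one you sketch. So: same approach as the paper, correct where the paper's cited results apply, and a legitimate observation about the uncovered range $1/p_1+1/p_2>1$.
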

 
 Theorems \ref{InT4} and \ref{InT4a} imply the following right order result in the case $\bp=\infty$ and $1\le q_1\le 2$.
 
 \begin{Theorem} \label{InT4b} Let $d=1$ and $\bF^\br_\bq$ denote one of the classes $\bW^\br_\bq$ or $\bH^\br_\bq$ (see the definition in Section \ref{fc} below) of functions of two variables.  Then for     $1\le q_1\le 2$, $1\le q_2 \le \infty$,  and $\br > \br(\bq)$   we have 
 $$
\sup_{K\in \bF^\br_\bq}  \sigma_m(K,\cL\cK(\infty))_\infty \asymp m^{-r_1-r_2 + 1/q_1} .
 $$
 \end{Theorem}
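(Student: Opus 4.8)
The plan is to obtain the two-sided estimate by simply assembling the upper bound of Theorem~\ref{InT4} with the lower bound of Theorem~\ref{InT4a}; since the theorem is stated as a corollary of these two, the only real work is to verify that the two exponents coincide once we restrict to the range $1\le q_1\le 2$, and that the objects being estimated (dictionary and norm) are literally the same in both statements at the endpoint $\bp=\infty$.

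For the upper bound I would invoke Theorem~\ref{InT4} verbatim. Its exponent is $-r_1-r_2+(1/q_1-1/2)_++1/2$. Under the present hypothesis $1\le q_1\le 2$ we have $1/q_1\ge 1/2$, hence $(1/q_1-1/2)_+=1/q_1-1/2$, and the exponent collapses to $-r_1-r_2+1/q_1$. This yields
$$
\sup_{K\in \bF^\br_\bq}\sigma_m(K,\cL\cK(\infty))_\infty \ll m^{-r_1-r_2+1/q_1}.
$$

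For the matching lower bound I would specialize Theorem~\ref{InT4a} to the endpoint vector $\bp=(\infty,\infty)$, so that $1/p_1=1/p_2=0$ and both the system $\cL\cK(\bp)$ and the ambient $L_\bp$-norm reduce to $\cL\cK(\infty)$ and $L_\infty$ respectively. The exponent $-r_1-r_2+1/q_1-1/p_1-1/p_2$ then becomes $-r_1-r_2+1/q_1$, giving
$$
\sup_{K\in \bF^\br_\bq}\sigma_m(K,\cL\cK(\infty))_\infty \gg m^{-r_1-r_2+1/q_1}.
$$
Combining the two displays produces the asserted equivalence $\asymp m^{-r_1-r_2+1/q_1}$ for all $1\le q_1\le 2$, $1\le q_2\le\infty$, and $\br>\br(\bq)$.

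I do not expect a genuine obstacle here: once Theorems~\ref{InT4} and~\ref{InT4a} are available, the corollary is purely arithmetic manipulation of the exponents together with the observation that the dictionaries and norms in the two statements describe the same object when $\bp=\infty$. The substantive content — the construction of a near-optimal $m$-term approximant from $\cL\cK(\infty)$ for the upper bound, and the selection of a hard kernel $K$ realizing the lower bound — is already contained in the proofs of those two theorems and needs no repetition. The single point worth flagging is that the restriction $q_1\le 2$ is exactly what forces $(1/q_1-1/2)_+=1/q_1-1/2$, closing the gap between the two exponents; for $q_1>2$ the quantity $(1/q_1-1/2)_+$ vanishes while the lower-bound exponent still contributes $1/q_1$, the two bounds separate, and the sharp order is no longer available, which is precisely why the statement is confined to $1\le q_1\le 2$.
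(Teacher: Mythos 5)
Your proposal is correct and coincides with the paper's own derivation: the paper obtains Theorem~\ref{InT4b} exactly by combining the upper bound of Theorem~\ref{InT4} (where $1\le q_1\le 2$ gives $(1/q_1-1/2)_++1/2=1/q_1$) with the lower bound of Theorem~\ref{InT4a} specialized to $\bp=(\infty,\infty)$. Your added remark on why the range $q_1\le 2$ is exactly where the two exponents match is a correct observation, consistent with the paper's surrounding discussion.
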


The case of bilinear approximation (non-adaptive case) is studied much better than the above case of the system $\cL\cK$ (adaptive case). The following result is known.

 \begin{Theorem}[{\cite{VT32}}, Theorem 2]\label{InT5} Let $d=1$ and $\bF^\br_\bq$ denote one of the classes $\bW^\br_\bq$ or $\bH^\br_\bq$. Then for $\br > \mathbf{1}$ and $1\le q_1\le p_1 \le \infty$, $1\le q_2,p_2 \le \infty$ we have 
 $$
 \sup_{K\in \bF^\br_\bq} \sigma_m(K,\Pi(\bp))_\bp \asymp m^{-r_1-r_2 + \xi(q_1,p_1)}  
 $$
 where $\xi(q,p)$ is defined in (\ref{ksi'}). 
 \end{Theorem}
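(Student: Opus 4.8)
The plan is to prove the upper and lower bounds separately, in both cases reducing the bilinear approximation of $K$ to a collection of rank estimates for dyadic frequency blocks and exploiting the tight link between best bilinear $m$-term approximation and Kolmogorov widths already visible in Theorem \ref{InT1}. First I would fix, for a periodic kernel $K$ of two variables, its dyadic (hyperbolic-cross) decomposition $K=\sum_{\bs}\delta_\bs(K)$, where $\bs=(s_1,s_2)$ and $\delta_\bs(K)$ is the block of $K$ with harmonics $|k_j|\asymp 2^{s_j}$. Both classes $\bW^\br_\bq$ and $\bH^\br_\bq$ enter only through the block-norm bounds $\|\delta_\bs(K)\|_\bq \ll 2^{-\br\cdot\bs}$, so they can be treated simultaneously. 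The one structural fact driving everything is that a bivariate trigonometric polynomial whose spectrum sits in a box of sizes $2^{s_1}\times 2^{s_2}$ is, after a singular value decomposition of its coefficient matrix, a sum of at most $c\min(2^{s_1},2^{s_2})$ rank-one products, i.e.\ lies in $\Sigma_r(\Pi(\bp))$ with $r\le c\min(2^{s_1},2^{s_2})$. Thus each block can be reproduced exactly by such a sum, and a rank-$r$ truncation of the SVD gives a partial bilinear approximation whose error is governed by the tail singular values, which is exactly a Kolmogorov-width quantity for the family of slices $\delta_\bs(K)(\cdot,\by)$.

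For the upper bound I would allocate a total budget of $m$ rank-one terms across the blocks. Blocks with both indices small are reproduced exactly; blocks with large $s_1$ are approximated by a rank-$r_\bs$ truncation (the width-type step); and the remaining high-frequency blocks are discarded, their contribution controlled in the vector norm $\|\cdot\|_\bp$ by the smoothness $\br$ together with the Nikol'skii-type inequalities that convert the $L_\bq$ normalization into the $L_\bp$ block norm. Writing out $\|\delta_\bs(K)\|_\bp \ll 2^{-r_1 s_1-r_2 s_2}$ (up to the integrability shift) and optimizing the allocation $\{r_\bs\}$ under $\sum_\bs r_\bs \asymp m$ yields the exponent $-r_1-r_2+\xi(q_1,p_1)$: the factor $\xi(q_1,p_1)$ is precisely the cost of the partial-rank step in the first variable, and the second variable contributes nothing beyond $r_2$ because, once an $r$-dimensional $\bx$-subspace is fixed, the accompanying $\by$-factors may be chosen freely in $L_{p_2}(\Omega^2)$. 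A careful count of the blocks on each hyperbolic layer is needed so that the summation does not produce spurious logarithmic factors; this is where the nonlinear, rank-adaptive nature of $\Pi(\bp)$ pays off.

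For the lower bound I would concentrate a kernel on a single dyadic block. Fixing $\bx$-frequencies in a block of size $N\asymp 2^{s_1}$ and choosing the $\by$-dependence to spread mass across $N$ essentially independent slices reduces $\sigma_m(K,\Pi(\bp))_\bp$, for $m\ll N$, to the finite-dimensional Kolmogorov width of the $L_{q_1}$-ball of univariate trigonometric polynomials of order $N$ measured in $L_{p_1}$: any rank-$m$ element of $\Sigma_m(\Pi(\bp))$ forces all $\bx$-slices of the approximant into one common $m$-dimensional subspace, so the $\bp$-error is bounded below by how badly an $m$-dimensional subspace can approximate the $N$ slices of $K$ in $L_{p_1}(\Omega^1)$. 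The sharp lower bound for these finite widths is governed by $N^{\xi(q_1,p_1)}$, and balancing $N$ against the smoothness decay $2^{-r_1 s_1-r_2 s_2}$ reproduces $m^{-r_1-r_2+\xi(q_1,p_1)}$; this is consistent with, and may be packaged through, the width estimates of Theorem \ref{InT1}.

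The hard part is the lower bound, and within it two points demand the most care. First, one must beat the nonlinearity of $\Sigma_m(\Pi(\bp))$: unlike in a linear width, the approximating $\bx$-subspace is allowed to depend on $K$, so the slicing reduction to a common $m$-dimensional subspace must be argued cleanly for the worst $K$ rather than merely quoted from a linear lower bound. Second, one must obtain the single exponent $\xi(q_1,p_1)=(1/q_1-\max(1/2,1/p_1))_+$ uniformly across the entire parameter range, in particular across the threshold $q_1=2$ and at the endpoints $p_1,p_2\in\{1,\infty\}$, where the extremal polynomials and the relevant small-ball and volume estimates change character. Matching these sharp finite-dimensional width lower bounds against the upper bound to close the relation $\asymp$ is the crux of the argument.
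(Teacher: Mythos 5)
First, a point of record: the paper does not prove Theorem \ref{InT5} at all --- it is quoted verbatim from \cite{VT32} (Theorem 2) as known background, so there is no in-paper proof to compare your attempt against. Your sketch therefore has to be judged against the classical argument of \cite{VT32}, whose overall architecture (dyadic block decomposition, a rank budget distributed over the blocks, reduction of the lower bound to finite-dimensional Kolmogorov widths) you have reconstructed correctly in outline.

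That said, two steps in your plan would not go through as written. For the upper bound you lean on the SVD: truncating the singular value decomposition of a block controls the error only in $L_2$; for general $\bp$, and in particular for $p_1=\infty$, the optimal $r$-dimensional subspace in $L_{p_1}$ is not spanned by singular functions, so one must instead use subspaces realizing $d_r(\cdot,L_{p_1})$ together with bounded linear projections (or dual functionals) to manufacture the accompanying $\by$-factors; moreover the claim that the second variable ``contributes nothing'' silently uses $r_2>1$ to absorb the Nikol'skii loss $2^{s_2(1/q_2-1/p_2)_+}$ when passing from the $L_{q_2}$ to the $L_{p_2}$ block norm. More seriously, your lower bound via slices bounds the error from below by the $L_{p_2}$-\emph{average} over $\by$ of the distances of the slices $K(\cdot,\by)$ to a common $m$-dimensional subspace. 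Since the claimed rate is independent of $p_2$, the lower bound must be established at $p_2=1$, where this average-distance statement is strictly stronger than a Kolmogorov-width lower bound for the \emph{set} of slices and does not follow from it. The classical resolution is to dualize the second variable rather than slice it: if $K\approx\sum_{i=1}^m u_iv_i$ with error $\varepsilon$ in $\|\cdot\|_{(p_1,q_2')}$, then $d_m(\bW^K_{q_2},L_{p_1})\le\varepsilon$, so lower bounds for $\sigma_m(K,\Pi(\bp))_\bp$ follow from the known sharp lower bounds for Kolmogorov widths of univariate Sobolev-type classes (Ismagilov, Kashin, Gluskin) applied to a kernel of the form of a normalized $\cV_n(x-y)$; this is also where the exponent $\xi(q_1,p_1)$ and the threshold $q_1=2$ are actually produced. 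Without this duality step (or an independent proof of the average-distance bound), your chain does not close.
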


In the case $\bp=\infty$, $1\le q_1\le 2$, Theorem \ref{InT5} gives the rate $m^{-r_1-r_2 + 1/q_1-1/2  }$, which is better than in Theorem \ref{InT4b}. It is not surprising because the system $\cL\cK(\bp)$ is a subsystem of the bilinear system $\Pi(\bp)$. 

{\bf Novelty.} Approximation theory plays important role in numerical analysis and in applied mathematics. Driven by applications in big data analysis, image/signal processing, machine learning, and artificial intelligence, approximation theory has moved to the level, where  we try to solve problems in as general formulation as possible. We have moved from univariate functions to multivariate functions, from finite dimensional spaces to infinitely dimensional ones, from classical systems like the trigonometric and algebraic systems to bases in Banach spaces and even father to arbitrary redundant dictionaries in Banach spaces. This paper is in that spirit. We study asymptotic behavior of the errors of sampling recovery not for an individual smoothness class, how it is usually done, but for the collection of classes, which are defined by integral operators with kernels coming from a given class of functions (see Problem $K$ above). Clearly, it is a much more general setting than the one studied before. On this way we have discovered in this paper that in the case of sampling recovery Problem $K$ is closely connected with a new problem of sparse nonlinear approximation with respect to redundant dictionaries, namely, with sparse approximation with adaptive dictionaries (see Subsection \ref{AD} above). We have obtained new results both in sampling recovery on classes defined by integral operators with kernels coming from a given class of functions and in sparse approximation with adaptive dictionaries. Also, another goal of the paper is to attract attention of researchers to the two important new directions mentioned above.

 \section{Function classes}
\label{fc}

As we pointed out in Section \ref{In}  (it is clear from Theorems \ref{InT1} -- \ref{InT5})  we study the case $d=1$, when the kernel $K(x,y)$ is a function of two variables. However, we give the definitions of the classes in the case of any $d\ge1$, because of its importance for all dimensions $d$. In order to avoid a confusion in the future (see Section \ref{kr} below) we use the notation $v$  here. In Theorems \ref{InT1} -- \ref{InT5} we have $v=2d=2$.

We begin with the definition of classes $\bW^\ba_\bq$ (see, for instance, \cite{VTmon}, p.31, in the case of scalar $q$).
\begin{Definition}\label{fcD1}
In the univariate case, for $a>0$, let
\be\label{Bi8}
F_a(x):= 1+2\sum_{k=1}^\infty k^{-a}\cos (kx-a\pi/2)
\ee
be the Bernoulli kernel and in the multivariate case, for $\ba=(a_1,\dots,a_v) \in \bbR^v_+$, $\bx=(x_1,\dots,x_v)\in \bbT^v$, let
\be\label{Bi8m}
F_\ba(\bx) := \prod_{j=1}^v F_{a_j}(x_j).
\ee
Denote for $\mathbf{1}\le \bq\le \infty$ (we understand the vector inequality coordinate wise)
$$
\bW^\ba_\bq := \{f:f=\varphi\ast F_\ba,\quad \|\varphi\|_\bq \le 1\},
$$
where
$$
( F_\ba \ast \varphi)(\bx):= (2\pi)^{-v}\int_{\bbT^v} F_\ba(\bx-\by) \ff(\by)d\by.
$$
\end{Definition}
The classes $\bW^\ba_\bq$ are classical classes of functions with {\it dominated mixed derivative}
(Sobolev-type classes of functions with mixed smoothness).
 
We now proceed to the definition of the classes $\bH^\ba_\bq := \bH^{\ba,v}_\bq$ of periodic functions of $v$ variables, which is based on the mixed differences (see, for instance, \cite{VTmon}, p.31,  in the case of scalar $q$).  
 
\begin{Definition}\label{fcD2}
Let  $\btt =(t_1,\dots,t_v)$ and $\Delta_{\btt}^l f(\bx)$
be the mixed $l$-th difference with
step $t_j$ in the variable $x_j$, that is
$$
\Delta_{\btt}^l f(\bx) :=\Delta_{t_v,v}^l\cdots\Delta_{t_1,1}^l
f(x_1,\dots ,x_v) .
$$
Let $e$ be a subset of natural numbers in $[1,v]$. We denote
$$
\Delta_{\btt}^l (e) :=\prod_{j\in e}\Delta_{t_j,j}^l,\qquad
\Delta_{\btt}^l (\varnothing) := Id \,-\, \text{identity operator}.
$$
We define the class $\bH_{\bq,l}^\ba B$, $l > \|\ba\|_\infty$,   as the set of
$f\in L_\bq(\bbT^v)$ such that for any $e$
\be\label{Bi9}
\bigl\|\Delta_{\btt}^l(e)f(\bx)\bigr\|_\bq\le B
\prod_{j\in e} |t_j |^{a_j} .
\ee
In the case $B=1$ we omit it. It is known (see Theorem \ref{H} below)  that the classes $\bH^\ba_{\bq,l}$ with different $l>\|\ba\|_\infty$ are equivalent. So, for convenience we omit $l$ from the notation. 
\end{Definition}

We now formulate a result, which gives an equivalent description of classes $\bH^\ba_{\bq,l}$. 
 We need some classical trigonometric polynomials. The univariate Fej\'er kernel of order $j - 1$:
$$
\mathcal K_{j} (x) := \sum_{|k|\le j} \bigl(1 - |k|/j\bigr) e^{ikx} 
=\frac{(\sin (jx/2))^2}{j (\sin (x/2))^2}.
$$
The Fej\'er kernel is an even nonnegative trigonometric
polynomial of order $j-1$.  It satisfies the obvious relations
\be\label{FKm}
\| \mathcal K_{j} \|_1 = 1, \qquad \| \mathcal K_{j} \|_{\infty} = j.
\ee
Let $\cK_\bj(\bx):= \prod_{i=1}^v \cK_{j_i}(x_i)$ be the $v$-variate Fej\'er kernels for $\bj = (j_1,\dots,j_d)$ and $\bx=(x_1,\dots,x_v)$.

The univariate de la Vall\'ee Poussin kernels are defined as follows
$$
\cV_m := 2\cK_{2m} - \cK_m.
$$
We also need the following special trigonometric polynomials.
Let $s$ be a nonnegative integer. We define
$$
\mathcal A_0 (x) := 1, \quad \mathcal A_1 (x) := \mathcal V_1 (x) - 1, \quad
\mathcal A_s (x) := \mathcal V_{2^{s-1}} (x) -\mathcal V_{2^{s-2}} (x),
\quad s\ge 2,
$$
where $\mathcal V_m$ are the de la Vall\'ee Poussin kernels defined above.
For $\bs=(s_1,\dots,s_v)\in \bbN^v_0$ define
$$
\cA_\bs(\bx) := \prod_{j=1}^v  \cA_{s_j}(x_j),\qquad \bx=(x_1,\dots,x_v)
$$
and
$$
A_\bs(f) := \cA_\bs \ast f.
$$

The following result is known (see, for instance, \cite{VTmon}, p.32, for the scalar $q$ and \cite{VT32} for the vector $\bq$).

\begin{Theorem}\label{H} Let $f\in \bH^\ba_{\bq,l}$, $\mathbf 1 \le \bq \le \infty$. Then, for $\bs \ge \mathbf 0$
\be\label{H1}
\|A_\bs(f)\|_\bq \le C(\ba,v,l)2^{-(\ba,\bs)}.
\ee
Conversely, from (\ref{H1}) it follows that there exists $B>0$, which does not depend on $f$, such that $f\in \bH^\ba_{\bq,l}B$.
\end{Theorem}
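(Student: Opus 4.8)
The plan is to prove the two implications of Theorem \ref{H} separately; in each case I would reduce to a one-dimensional statement and then tensorize over the $v$ coordinates, using that $A_\bs=\prod_{j=1}^v A_{s_j}$ is a composition of the univariate block operators acting in distinct variables, which therefore commute with one another and with any difference $\Delta_{t_k,k}^l$ taken in a different variable.

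\textbf{Direct estimate.} For (\ref{H1}) the one-variable input is twofold: each $\cA_s$ has uniformly bounded $L_1$-norm ($\|\cA_s\|_1\le 6$, which follows from (\ref{FKm}) and $\cV_m=2\cK_{2m}-\cK_m$), and its Fourier coefficients vanish for $|k|\le 2^{s-2}$, so that $\cA_s\ast_j P=0$ for every trigonometric polynomial $P$ of degree less than $2^{s-2}$ in the variable $x_j$. Hence, freezing the remaining variables, $\|A_{s_j}(g)\|_\bq\le\|\cA_{s_j}\|_1 E_{2^{s_j-2}}(g)_\bq$, where $E_n(\cdot)_\bq$ denotes best approximation in $x_j$ by polynomials of degree $<n$; Jackson's inequality then bounds this by $C\sup_{|h|\le 2^{-s_j}}\|\Delta_{h,j}^l g\|_\bq$. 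Starting from $A_\bs(f)=A_{s_1}\cdots A_{s_v}f$ I would peel off one block at a time, each time moving the newly produced difference past the remaining (commuting) block operators, to reach
\[
\|A_\bs(f)\|_\bq\le C\sup\Big\{\|\Delta_\btt^l([1,v])f\|_\bq:\ |t_j|\le 2^{-s_j},\ 1\le j\le v\Big\}.
\]
Since $f\in\bH^\ba_{\bq,l}$ gives $\|\Delta_\btt^l([1,v])f\|_\bq\le\prod_j|t_j|^{a_j}$, choosing $|t_j|=2^{-s_j}$ produces exactly $2^{-(\ba,\bs)}$ up to the constant $C(\ba,v,l)$.

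\textbf{Converse.} For the second implication I would start from the reconstruction identity $f=\sum_{\bs\ge\mathbf 0}A_\bs(f)$, which follows from the telescoping $\sum_{s=0}^S\cA_s=\cV_{2^{S-1}}$ and $\cV_m\to\mathrm{Id}$, and expand $\Delta_\btt^l(e)f=\sum_{\bs\ge\mathbf 0}\Delta_\btt^l(e)A_\bs(f)$. The decisive tool is a Bernstein-type bound for a difference of a single block: since $A_\bs(f)$ is in each variable $j$ a trigonometric polynomial of degree $<2^{s_j}$, one has $\|\Delta_{t_j,j}^l A_\bs(f)\|_\bq\le C\min\big((2^{s_j}|t_j|)^l,1\big)\|A_\bs(f)\|_\bq$, obtained from $\|\Delta_{t_j,j}T\|_\bq\le|t_j|\,\|\partial_j T\|_\bq$ and Bernstein's inequality $\|\partial_j T\|_\bq\le 2^{s_j}\|T\|_\bq$ iterated $l$ times, with the trivial bound $2^l$ covering the complementary range. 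Multiplying these factors over $j\in e$ and inserting (\ref{H1}) yields
\[
\|\Delta_\btt^l(e)A_\bs(f)\|_\bq\le C\Big(\prod_{j\in e}\min\big((2^{s_j}|t_j|)^l,1\big)\Big)2^{-(\ba,\bs)}.
\]
It remains to sum over $\bs$: the sum factors over coordinates, and for each $j\in e$ I would split the series in $s_j$ at $2^{s_j}\asymp|t_j|^{-1}$. Below the split the summand is $|t_j|^l 2^{(l-a_j)s_j}$, a geometric series with ratio exceeding $1$ (here $a_j<l$ is used) dominated by its top term $\asymp|t_j|^{a_j}$; above the split it is $2^{-a_js_j}$, geometric with ratio below $1$ (here $a_j>0$ is used) dominated by its bottom term $\asymp|t_j|^{a_j}$. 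For $j\notin e$ the factor $\sum_{s_j\ge0}2^{-a_js_j}$ is a convergent constant. Multiplying over all coordinates gives $\|\Delta_\btt^l(e)f\|_\bq\le B\prod_{j\in e}|t_j|^{a_j}$ with $B=B(\ba,v,l)$, i.e. $f\in\bH^\ba_{\bq,l}B$; note that the hypothesis $l>\|\ba\|_\infty$, equivalently $0<a_j<l$ for all $j$, is exactly what makes the two geometric series converge.

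\textbf{Main obstacle.} I expect the genuine difficulty to be bookkeeping of the vector norm $\|\cdot\|_\bq$ rather than any single inequality. The $L_1$-boundedness of $\cA_s$, Jackson's and Bernstein's inequalities, and the commutation of block operators in different variables are all one-dimensional facts; to deploy them inside the iterated norm one must freeze all but one variable, apply the univariate estimate with a constant independent of the frozen variables, and only then take the outer norms, invoking a Fubini/Minkowski-type argument and, at the endpoints $q_j=\infty$, continuity of $f$. Carrying this through uniformly for all $\mathbf 1\le\bq\le\infty$ is where most of the care is needed; once the block bound (\ref{H1}) and the Bernstein inequality are available for the mixed norm, the dyadic splitting and the geometric summations are routine.
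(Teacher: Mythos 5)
The paper does not prove Theorem \ref{H}; it quotes it as a known result, citing \cite{VTmon} (p.~32) for scalar $q$ and \cite{VT32} for vector $\bq$. Your argument is the standard Jackson--Bernstein proof used in those references (annihilation of low frequencies plus $\|\cA_s\|_1\le 6$ for the direct bound; Bernstein iterated $l$ times plus the dyadic splitting of the geometric sums, with $0<a_j<l$, for the converse), and it is essentially correct. One small imprecision: in the direct part your displayed inequality with $e=[1,v]$ only makes sense for $s_j\ge 2$ in every coordinate; for coordinates with $s_j\in\{0,1\}$ the operator $\cA_{s_j}$ does not annihilate constants, so you should instead use the trivial bound $\|A_{s_j}\|_{L_{q_j}\to L_{q_j}}\le C$ there and invoke the defining condition (\ref{Bi9}) with the subset $e=\{j:\ s_j\ge 2\}$ --- which is exactly why the class is defined via all subsets $e$. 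With that adjustment, and the Fubini/Minkowski care for the iterated norm that you already flag, the proof goes through.
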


The reader can find results on approximation properties of these classes in the books \cite{VTmon}, \cite{VTbookMA}, and \cite{DTU}.
In this paper we consider the case, when $v=2d$, $d\in \bbN$, $\mathbf{1}\le \bq \le \infty$, and $\ba$ has a special form: $a_j = r$ for $j=1,\dots,2d$. In this case we write $\bW^r_\bq$ and $\bH^{r,2d}_\bq$.  

\section{Some known results on sampling recovery}
\label{kr}

In this section we formulate one known result  in the case $d=1$ -- Theorem \ref{univarR}, which we use later. In addition we formulate some known results in the case $d>1$ in order to demonstrate the difficulty of the problem in this case. 
Functions of the form
$$
t(x) = \sum_{|k|\le n}c_k e^{ikx} =a_0/2+\sum_{k=1}^n
(a_k\cos kx+b_k\sin kx)
$$
are called trigonometric polynomials of order $n$. The set of such polynomials is denoted
by $\cT(n)$.

The  Dirichlet kernel of order $n$
$$
\mathcal D_n (x):= \sum_{|k|\le n}e^{ikx} = e^{-inx} (e^{i(2n+1)x} - 1)
(e^{ix} - 1)^{-1} =\frac{\sin (n + 1/2)x}{\sin (x/2)}
$$
is an even trigonometric polynomial.
Denote
$$
x^j := 2\pi j/(2n+1), \qquad j = 0, 1, ..., 2n.
$$
Clearly,  the points $x^j$,  $j = 1, \dots, 2n$,  are zeros of the Dirichlet
kernel $\mathcal D_n$ on $[0, 2\pi]$.
Therefore,  for any continuous $f$
 $$
I_n(f)(x) := (2n+1)^{-1}\sum_{j=0}^{2n} f(x^j) \mathcal D_n (x - x^j)
$$
interpolates $f$ at points $x^j$: $I_n(f)(x^j)= f(x^j)$, $j=0, 1, ..., 2n$. 

It is easy to check that for any $t\in \cT(n)$ we have $I_n(t)=t$. Using this and the inequality 
$$
\bigl|\mathcal D_n (x)\bigr|\le \min \bigl(2n+1, \pi/|x|\bigr), \qquad
 |x|\le \pi,
$$
we obtain the following Lebesgue inequality
$$
\|f-I_n(f)\|_\infty \le C(\ln(n+1))E_n(f)_\infty,
$$
where $E_n(f)_p$ is the best approximation of $f$ in the $L_p$ norm by polynomials from $\cT(n)$.

The de la Vall\'ee Poussin kernels defined above can also be written as follows
$$
\mathcal V_{n} (x) := n^{-1}\sum_{k=n}^{2n-1} \mathcal D_k (x) = \frac{\cos nx - \cos 2nx}{n (\sin (x/2))^2}.
$$
 
The  de la Vall\'ee Poussin kernels $\mathcal V_{n}$ are even trigonometric
polynomials of order $2n - 1$ with the majorant
$$
\left| \mathcal V_{n} (x) \right| \le C \min \left(n, \ 
 1/(nx^2)\right), \ |x|\le \pi.
$$
 
Consider the following  recovery operator (see \cite{VT23} and \cite{VT51})
$$
R_n (f) := (4n)^{-1}\sum_{j=1}^{4n} f\left(x(j)\right)\mathcal V_n
\left(x - x(j)\right), \qquad x(j) := \frac{\pi j}{2n}.
$$

Let $I_n$ and $R_n$ be the recovery operators defined above and $W^r_q$ stands for $\bW^r_q$ in the univariate case. 

\begin{Theorem}[{\cite{VT51}}]\label{univarR} Let  $1\le q,p \le \infty$ and $r>1/q$. Then
$$
 \varrho_{4m}(W^r_q,L_p) \asymp \sup_{f\in W^r_q}\|f-R_m(f)\|_p \asymp m^{-r+(1/q-1/p)_+}.
$$
In the case  $1<p<\infty$ the above estimates are valid for the operator  $I_m$ instead of the operator  $R_m$. 
\end{Theorem}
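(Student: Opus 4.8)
The plan is to prove matching upper and lower bounds of order $m^{-r+(1/q-1/p)_+}$. The middle and right quantities are trivially linked: since $R_m$ is a linear operator built from the $4m$ samples $\{f(x(j))\}_{j=1}^{4m}$, we have $\varrho_{4m}(W^r_q,L_p)\le \sup_{f\in W^r_q}\|f-R_m(f)\|_p$, so an upper bound for the operator error bounds $\varrho_{4m}$. By (\ref{In2}) a lower bound for $\varrho^o_{4m}$ is a lower bound for $\varrho_{4m}$. Hence it suffices to (i) bound $\sup_{f}\|f-R_m(f)\|_p \ll m^{-r+(1/q-1/p)_+}$ and (ii) bound $\varrho^o_{4m}(W^r_q,L_p)\gg m^{-r+(1/q-1/p)_+}$. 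The hypothesis $r>1/q$ enters twice: it gives the Sobolev embedding $\bW^r_q\hookrightarrow \cC(\bbT)$ so that point evaluations $f(x(j))$ make sense, and it is exactly the condition making the aliasing series below converge (note $1/q\ge(1/q-1/p)_+$, so $r>1/q$ is the binding constraint).

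For the upper bound I would first record the two structural facts about $R_m$: by the quadrature structure of the nodes $x(j)=\pi j/(2m)$ together with $\deg\cV_m=2m-1$, the operator reproduces $\cT(m)$, i.e.\ $R_m(t)=t$ for $t\in\cT(m)$, and $\|R_m\|_{\cC\to L_p}\ll 1$ uniformly in $m,p$ (since the discrete majorant $(4m)^{-1}\sum_j|\cV_m(\cdot-x(j))|$ is bounded, using $\|\cV_m\|_1\le C$). Then I split $f-R_m(f)=(f-V_m(f))+(V_m(f)-R_m(f))$ with $V_m(f):=\cV_m\ast f$. The approximation term is routine: using de la Vall\'ee Poussin blocks with $\|A_s(f)\|_q\ll 2^{-rs}$ for $f\in\bW^r_q$ and the Nikolskii inequality $\|A_s(f)\|_p\ll 2^{s(1/q-1/p)_+}\|A_s(f)\|_q$, one sums
$$
\|f-V_m(f)\|_p\le\sum_{2^s\gg m}\|A_s(f)\|_p\ll\sum_{2^s\gg m}2^{s(-r+(1/q-1/p)_+)}\ll m^{-r+(1/q-1/p)_+}.
$$

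The aliasing term $V_m(f)-R_m(f)$ is the crux. It is a polynomial of degree $<2m$ whose Fourier coefficients are the folded sums $\widehat{(V_mf-R_mf)}(k)=-v_k\sum_{n\ne0}\widehat f(k+4mn)$ for $|k|<2m$, so it depends only on the high frequencies $|\ell|\ge 2m$ of $f$. The model computation is in $L_2$: from $|\widehat f(\ell)|\le\|\varphi\|_1|\widehat F_r(\ell)|\ll|\ell|^{-r}$ (valid for all $q\ge1$ since $\|\varphi\|_1\le\|\varphi\|_q\le1$) and $|k+4mn|\ge 2m$ one gets $\sum_{n\ne0}|\widehat f(k+4mn)|\ll m^{-r}$, whence $\|V_mf-R_mf\|_2^2\ll \sum_{|k|<2m}m^{-2r}\ll m^{1-2r}$; this is precisely where convergence forces $r>1/q$. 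The main obstacle is upgrading this to the sharp rate for all admissible $(q,p)$ with $2\le p$: here I would invoke the sampling‑discretization (Marcinkiewicz--Zygmund) inequalities for the blocks of degree $\lesssim m$ to pass between $\|R_m(g)\|_p$ and the normalized discrete $L_p$ norm of $g$, estimate the far tail by the folding bound above, and interpolate between the $L_2$ (or $L_1$) and $L_\infty$ endpoints, controlling the intermediate blocks by Nikolskii. This block-and-fold bookkeeping, making the $-1/p$ gain appear rather than the crude $L_\infty$ rate $m^{-r+1/q}$, is the technically delicate part. The $I_m$ variant for $1<p<\infty$ then follows from the same scheme, using that $I_m$ is uniformly bounded on $L_p(\bbT)$ for $1<p<\infty$ (M.\ Riesz / Marcinkiewicz multiplier) in place of the $\cC\to L_p$ bound, which removes the logarithmic loss present at the endpoints $p\in\{1,\infty\}$.

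For the lower bound I would use a fooling argument. Partition $\bbT$ into $N=8m$ disjoint arcs of length $\asymp 1/m$ and put on each a scaled smooth bump $g_k\in\bW^r_q$ (realized as $g_k=\varphi_k\ast F_r$ with $\varphi_k$ a single bump), normalized so that disjoint supports give $\|\sum_k c_kg_k\|_{\bW^r_q}\asymp\|c\|_{\ell_q}$ and $\|\sum_k c_kg_k\|_p\asymp m^{-r+1/q-1/p}\|c\|_{\ell_p}$. Sampling at the $4m$ adversarial points is a linear map $c\mapsto(f(\xi^i))_i\in\bbC^{4m}$; since $N=8m>4m$ its kernel is nontrivial, so there is $c\ne0$ with $f$ vanishing at every $\xi^i$. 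Any recovery method returns the same output for $f$ and $-f$, giving $\varrho^o_{4m}\ge\|f\|_p$. Choosing $c$ in the kernel with $\|c\|_{\ell_q}\le1$: for $p\ge q$ concentrate $c$ on a single bump lying in a point‑free gap to get $\|f\|_p\gg m^{-r+1/q-1/p}$; for $p<q$ spread $c$ over $\asymp m$ equal entries to get $\|c\|_{\ell_p}\asymp m^{1/p-1/q}$ and hence $\|f\|_p\gg m^{-r}$. In both cases this is $\gg m^{-r+(1/q-1/p)_+}$, matching the upper bound and closing the $\asymp$.
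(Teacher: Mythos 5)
The paper does not prove Theorem \ref{univarR} at all: it is quoted from \cite{VT51}. So your attempt must be judged against the classical argument there, whose skeleton you have correctly identified for the upper bound: $R_m$ reproduces $\cT(m)$, is uniformly bounded from $\cC$ into $L_p$, the approximation part $f-V_mf$ is handled by the blocks $A_s$ and Nikolskii, and the error concentrates in the aliasing term with folded coefficients $b_k=-v_k\sum_{n\neq0}\hat\varphi(k+4mn)\widehat F_r(k+4mn)$. However, the step you yourself flag as "technically delicate" is where the theorem actually lives, and the tool you name for it does not apply: Marcinkiewicz--Zygmund discretization for polynomials of degree $\lesssim m$ says nothing about the aliased content, which comes precisely from frequencies exceeding the number $4m$ of sample points; used crudely on the high blocks it returns the $L_\infty$ rate $m^{-r+1/q}$ you are trying to beat. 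The gain of $-1/p$ comes from estimating the coefficient sequence $(b_k)_{|k|<2m}$ itself: for $1\le q\le 2$ one has $\|b\|_{\ell_{q'}}\ll m^{-r}\|\hat\varphi\|_{\ell_{q'}}\ll m^{-r}$ by Hausdorff--Young (Parseval at $q=2$, the trivial $\ell_\infty$ bound at $q=1$, which is your model computation), and then Hausdorff--Young in the reverse direction together with the comparison of $\ell_{p'}$ and $\ell_{q'}$ on $4m$ coordinates gives $\|V_mf-R_mf\|_p\ll m^{1/q-1/p-r}$ for $q\le2\le p$; the remaining ranges, in particular $p\le q$ where the target is $m^{-r}$ and your $L_2$ bound $m^{1/2-r}$ is insufficient, require separate duality/interpolation arguments. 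As written, the decisive computation is missing.

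The lower bound contains a genuine error. Since $W^r_q$ is defined by convolution with the Bernoulli kernel, $g_k=\varphi_k\ast F_r$ is \emph{not} supported on the $k$-th arc: $F_r$ is a fixed global function on $\bbT$, so each $g_k$ has size $\asymp m^{1/q-1}$ on all of $\bbT$, and for $c=(1,\dots,1)$ one gets $\|\sum_k c_kg_k\|_p\asymp m^{1/q}$ rather than the $m^{-r+1/q}$ predicted by your "disjoint supports" formula --- the claimed equivalence $\|\sum_kc_kg_k\|_p\asymp m^{-r+1/q-1/p}\|c\|_{\ell_p}$ fails by a factor $m^{r}$. In addition, you cannot simultaneously constrain $c$ to lie in the kernel of the sampling map and prescribe its shape ("concentrate on a single bump", "spread over $\asymp m$ equal entries"); the kernel is a specific subspace, and extracting from it a suitably flat or suitably peaked vector requires a separate argument. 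The standard repair --- and the mechanism this paper itself uses for Theorem \ref{LbT1} --- is to work with trigonometric polynomials: by the Bernstein inequality $c\,n^{-r}\cT(n)_q\subset W^r_q$, and one then exhibits a fooling polynomial of degree $\asymp m$ vanishing at the $4m$ nodes with a large ratio $\|t\|_p/\|t\|_q$, which is exactly the content of Lemma \ref{qpL1}. Your localized-bump picture is the right intuition for Sobolev classes defined by local derivatives, but it does not transfer verbatim to the convolution classes $W^r_q=\{\varphi\ast F_r\}$ without justifying that suitably scaled compactly supported bumps belong to the class and that their $\varphi$-preimages are almost orthogonal in $L_q$.
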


We now proceed to the case $d>1$ and give brief comments on the classical Smolyak recovery operators. We refer the reader for a detailed discussion of these and related operators to the books \cite{VTbookNS}, \cite{VTbookMA}, and \cite{DTU}. Let for  $i=1,\dots,d$ operator  $R_n^i$ be the operator  $R_n$ acting with respect to the variable  $x_i$. 
Denote 
$$
 \Delta_s^i := R_{2^s}^i - R_{2^{s-1}}^i,\quad R_{1/2} =0, 
$$
and for  $\bs=(s_1,\dots,s_d)\in \bbN^d_0$
$$
 \Delta_\bs := \prod_{i=1}^d \Delta_{s_i}^i.
$$
Consider the recovery operator (Smolyak operator)
$$
 T_n := \sum_{\bs:\|\bs\|_1\le n} \Delta_\bs.
$$
Operator  $T_n$ uses  $m$ function values with  $m\ll \sum_{k=1}^n 2^k k^{d-1} \ll 2^n n^{d-1}$. 

The following bound was obtained by  S. Smolyak in \cite{Smol}.  
$$
 \sup_{f\in\bW^r_\infty} \|f-T_n\|_\infty \ll 2^{-rn}n^{d-1},\quad r>0.
$$
It was extended to the case $p<\infty$ in  \cite{VT23}:
$$
 \sup_{f\in\bW^r_p} \|f-T_n\|_p \ll 2^{-rn}n^{d-1},\quad r>1/p.
$$
Here are some further results. We have (\cite{VT51})
$$
 \varrho_m(\bW^r_2)_\infty \asymp m^{-r+1/2} (\log m)^{r(d-1)},\quad r>1/2.
$$
The order of optimal recovery in the above case is provided by the Smolyak operator  $T_n$ with an appropriate $n$.
 Also we know (\cite{VT51}) for $q\in (1,\infty)$
$$
  \sup_{f\in \bW^r_q}\|f-T_n(f)\|_\infty \asymp 2^{-(r-1/q)n}n^{(d-1)(1-1/q)}.
$$

Most of the known results on optimal sampling recovery deal with the linear recovery methods. We now give some very brief comments on recent results in this direction and 
refer the reader to the books \cite{DTU}, \cite{VTbookMA} and to the survey paper \cite{KKLT} for a discussion of the previous results in this direction. We are interested in results, which relate the errors of sampling recovery with the Kolmogorov widths for general function classes. We begin with a result from \cite{VT183}. 

\begin{Theorem}[{\cite{VT183}}]\label{VT183T1} There exist two positive absolute constants $b$ and $B$ such that for any   compact subset $\Omega$  of $\bbR^d$, any probability measure $\mu$ on it, and any compact subset $\bF$ of $\cC(\Omega)$ we have
\be\label{kr1}
\ro_{bn}(\bF,L_2(\Omega,\mu)) \le Bd_n(\bF,L_\infty).
\ee
\end{Theorem}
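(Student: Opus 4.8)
The plan is to combine the two ingredients hiding in the statement: the Kolmogorov width $d_n(\bF,L_\infty)$ provides a good \emph{uniform} approximating subspace, while a discretization theorem with a number of nodes proportional to the dimension lets us replace that subspace approximation by a linear procedure based on $bn$ point samples. The recovery map will be a weighted discrete least--squares projection onto the approximating subspace.

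First I would fix $n$ and use the definition of $d_n:=d_n(\bF,L_\infty)$ to select a subspace $U_n\subset\cC(\Omega)$ with $\dim U_n\le n$ such that every $f\in\bF$ admits $u_f\in U_n$ with $\|f-u_f\|_\infty\le 2d_n$ (the factor $2$ absorbs the infimum in the width). To control total mass later I pass to $\tilde U_n:=U_n+\sp\{1\}$, of dimension at most $n+1$. Since $\mu$ is a probability measure, $\|\cdot\|_2\le\|\cdot\|_\infty$, so in particular $\|f-u_f\|_2\le 2d_n$.

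The crucial — and by far the hardest — input is a weighted Marcinkiewicz-type discretization of the $L_2$-norm on $\tilde U_n$ with $O(\dim)$ nodes: there are absolute constants $b,c_1,c_2$, points $\xi^1,\dots,\xi^m\in\Omega$ with $m\le b(n+1)\le 2bn$, and weights $w_j\ge 0$ such that
$$
c_1\|u\|_2^2\le \sum_{j=1}^m w_j\,|u(\xi^j)|^2\le c_2\|u\|_2^2,\qquad u\in\tilde U_n .
$$
This is where the real work sits: the existence of a number of sampling points proportional to the dimension (with controlled weights and absolute constants independent of $\Omega,\mu,\bF,n$) rests on the Batson--Spielman--Srivastava sparsification machinery transplanted to subspaces of $\cC(\Omega)$, and is a nontrivial theorem in its own right. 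Note that applying the upper bound to the constant $1\in\tilde U_n$ gives $\sum_{j=1}^m w_j\le c_2\|1\|_2^2=c_2$, which is the normalization I will use.

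Next I define the recovery operator: given the data $(f(\xi^1),\dots,f(\xi^m))$, let $\Phi(f)\in\tilde U_n$ be the element minimizing $\sum_{j=1}^m w_j\,|f(\xi^j)-u(\xi^j)|^2$ over $u\in\tilde U_n$. The minimizer depends linearly on the sampled values, so $\Phi$ is a linear map $\bbC^m\to L_2(\Omega,\mu)$ and is admissible in the definition of $\ro_{2bn}$. For the error I split $\|f-\Phi(f)\|_2\le\|f-u_f\|_2+\|u_f-\Phi(f)\|_2$, where the first term is at most $2d_n$ by the first step. For the second term, $u_f-\Phi(f)\in\tilde U_n$, so the lower discretization bound gives
$$
\|u_f-\Phi(f)\|_2^2\le \frac{1}{c_1}\sum_{j=1}^m w_j\,|u_f(\xi^j)-\Phi(f)(\xi^j)|^2 .
$$
Using the triangle inequality for the seminorm $u\mapsto(\sum_j w_j|u(\xi^j)|^2)^{1/2}$ together with the fact that $\Phi(f)$ minimizes the weighted discrete square error over $\tilde U_n\ni u_f$,
$$
\sum_{j=1}^m w_j\,|u_f(\xi^j)-\Phi(f)(\xi^j)|^2\le 4\sum_{j=1}^m w_j\,|f(\xi^j)-u_f(\xi^j)|^2\le 4\,\|f-u_f\|_\infty^2\sum_{j=1}^m w_j\le 4c_2\,(2d_n)^2 .
$$
Collecting the bounds yields $\|f-\Phi(f)\|_2\le\big(2+4\sqrt{c_2/c_1}\big)d_n=:B\,d_n$ with $B$ absolute, uniformly in $f\in\bF$. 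Since the operator uses $m\le 2bn$ nodes, this is exactly $\ro_{2bn}(\bF,L_2(\Omega,\mu))\le B\,d_n(\bF,L_\infty)$; renaming $2b$ as the absolute constant $b$ gives the stated inequality. The only genuinely difficult step is the dimension-proportional discretization quoted above; everything else is the elementary least--squares estimate just carried out.
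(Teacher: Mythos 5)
Your argument is correct and follows essentially the same route as the proof in the cited source \cite{VT183} (the present paper only quotes the theorem without proof): a near-best $n$-dimensional subspace from $d_n(\bF,L_\infty)$, a weighted Marcinkiewicz-type $L_2$-discretization with a number of nodes proportional to the dimension and with $\sum_j w_j$ controlled by including the constants, followed by a weighted discrete least-squares projection. The dimension-proportional weighted discretization is indeed the one genuinely nontrivial external ingredient (of Batson--Spielman--Srivastava type) on which that proof rests, and you have identified and used it correctly.
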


The following generalization of Theorem \ref{VT183T1} to the case $2<p\le \infty$ was obtained in \cite{KPUU2}. 

\begin{Theorem}[{\cite{KPUU2}}]\label{KPUU2} Let $2\le p\le \infty$. There exists a positive absolute constant $C$ such that for any   compact subset $\Omega$  of $\bbR^d$, any probability measure $\mu$ on it, and any compact subset $\bF$ of $\cC(\Omega)$ we have
\be\label{kr2}
\ro_{4n}(\bF,L_p(\Omega,\mu)) \le Cn^{1/2-1/p}d_n(\bF,L_\infty).
\ee
\end{Theorem}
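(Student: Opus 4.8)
The plan is to deduce the range $2<p\le\infty$ from the already established case $p=2$ (Theorem \ref{VT183T1}) by re-using the same sampling nodes and the same recovery operator, and by interpolating the relevant operator norms. Write $\delta:=d_n(\bF,L_\infty)$. First I would unpack the construction underlying Theorem \ref{VT183T1}: the width $d_n(\bF,L_\infty)$ furnishes an $n$-dimensional subspace $V_n\subset\cC(\Omega)$ with $\sup_{f\in\bF}\inf_{u\in V_n}\|f-u\|_\infty\le 2\delta$; a sampling (Marcinkiewicz-type) discretization of the $L_2$ norm on $V_n$ then supplies $m\ll n$ points $\xi^1,\dots,\xi^m$ and weights $w_j>0$ with $\sum_j w_j\ll 1$ such that $\sum_j w_j|v(\xi^j)|^2\asymp\|v\|_2^2$ for $v\in V_n$; and the weighted least–squares reconstruction $\Psi:\bbC^m\to L_p(\Omega,\mu)$, through which the recovery operator factors as $\Phi f=\Psi(f(\xi^1),\dots,f(\xi^m))$, reproduces $V_n$ (so $\Phi v=v$ for $v\in V_n$) and satisfies $\|f-\Phi f\|_2\ll\delta$. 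Since the nodes use $m\ll n$ points, after relabeling the bound will be stated at $4n$.

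Next I would reduce everything to a single operator–norm estimate. Fix $f\in\bF$ and let $u\in V_n$ be a best $L_\infty$ approximant, $\|f-u\|_\infty\le 2\delta$. By the triangle inequality and $\Phi u=u$,
$$
\|f-\Phi f\|_p\le\|f-u\|_p+\|\Phi f-u\|_p=\|f-u\|_p+\|\Psi\big((f-u)(\xi^1),\dots,(f-u)(\xi^m)\big)\|_p .
$$
Because $\mu$ is a probability measure the first term is $\le\|f-u\|_\infty\le 2\delta$, and the second is at most $\|\Psi\|_{\ell_\infty^m\to L_p}\,\|f-u\|_\infty\le 2\delta\,\|\Psi\|_{\ell_\infty^m\to L_p}$, where $\ell_\infty^m$ denotes $\bbC^m$ with the sup norm. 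Thus it suffices to prove $\|\Psi\|_{\ell_\infty^m\to L_p}\ll n^{1/2-1/p}$, after which the theorem follows by taking the supremum over $f\in\bF$.

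To estimate this norm I would interpolate between the two endpoints $p=2$ and $p=\infty$, keeping the domain $\ell_\infty^m$ fixed and interpolating only the target space. The bound $\|\Psi\|_{\ell_\infty^m\to L_2}\ll 1$ is easy: for $c\in\bbC^m$ one has $\|\Psi(c)\|_2\ll(\sum_j w_j|c_j|^2)^{1/2}\le(\sum_j w_j)^{1/2}\|c\|_{\ell_\infty}\ll\|c\|_{\ell_\infty}$, using the $L_2$ stability of the least–squares fit and $\sum_j w_j\ll1$. Granting the companion bound $\|\Psi\|_{\ell_\infty^m\to L_\infty}\ll\sqrt n$, the Riesz--Thorin theorem (domain $\ell_\infty^m$, target exponent determined by $1/p=(2/p)\cdot\tfrac12$) gives
$$
\|\Psi\|_{\ell_\infty^m\to L_p}\ll 1^{2/p}\big(\sqrt n\big)^{1-2/p}=n^{1/2-1/p},
$$
which is exactly what is needed.

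The hard part will be the $\ell_\infty^m\to L_\infty$ endpoint $\|\Psi\|_{\ell_\infty^m\to L_\infty}\ll\sqrt n$. Writing $\Psi(c)(x)=\sum_j w_j c_j K(x,\xi^j)$ with $K$ the (weighted) reproducing kernel of $V_n$, this amounts to the Lebesgue–function estimate $\sup_x\sum_j w_j|K(x,\xi^j)|\ll\sqrt n$. A naive Cauchy--Schwarz step bounds the inner sum by $(\sum_j w_j)^{1/2}\big(\sum_j w_j|K(x,\xi^j)|^2\big)^{1/2}\ll\sqrt{K(x,x)}$, i.e. by the pointwise Christoffel quantity $\big(\sum_i|u_i(x)|^2\big)^{1/2}$, whose sup can be far larger than $\sqrt n$; an $n$-dimensional subspace need not satisfy a Nikolskii inequality $\|v\|_\infty\ll\sqrt n\,\|v\|_2$. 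The estimate must therefore exploit the finer structure of the discretization construction — the specific choice of nodes and weights from the subsampling step and, crucially, the fact that here the width is measured in $L_\infty$ (controlling the approximant in the uniform norm) rather than in $L_2$ — to beat the crude Christoffel bound and secure the linear-in-$\sqrt n$ control. This is the step I expect to absorb most of the technical work; the interpolation and the reduction above are routine once it is in hand.
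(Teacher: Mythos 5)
This statement is quoted in the paper as a known result from the reference \cite{KPUU2}; the paper gives no proof of its own, so there is nothing internal to compare against --- the only question is whether your argument actually establishes the theorem. Your reduction and interpolation are fine: given an operator $\Psi$ built on $m\ll n$ function values that reproduces a near-optimal $n$-dimensional subspace $V_n$, the triangle inequality plus Riesz--Thorin between $\|\Psi\|_{\ell_\infty^m\to L_2}\ll1$ and $\|\Psi\|_{\ell_\infty^m\to L_\infty}\ll\sqrt n$ does yield the claimed bound $Cn^{1/2-1/p}d_n(\bF,L_\infty)$. But the endpoint estimate $\|\Psi\|_{\ell_\infty^m\to L_\infty}\ll\sqrt n$, which you explicitly leave open, is not a technical remainder --- it is the entire content of the theorem and is precisely the main result of \cite{KPUU2} (a Kadets--Snobar-type bound for \emph{sampling} projections: every $n$-dimensional subspace of $\cC(\Omega)$ admits a projection based on $O(n)$ function values with uniform norm $O(\sqrt n)$). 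As you yourself observe, the Christoffel-function bound coming from the $L_2$ discretization can be arbitrarily large compared with $\sqrt n$, since a general $n$-dimensional subspace satisfies no Nikolskii inequality.

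A further concrete problem with your plan is the premise that one can ``re-use the same sampling nodes and the same recovery operator'' from the $L_2$ theory behind Theorem \ref{VT183T1}. The weighted least-squares nodes there are chosen to discretize the $L_2$ norm on $V_n$, and nothing in that construction controls the Lebesgue function $\sup_x\sum_j w_j|K(x,\xi^j)|$; for such nodes the $\ell_\infty^m\to L_\infty$ norm can genuinely exceed $\sqrt n$. The argument in \cite{KPUU2} selects the points differently --- via a change of density (a Lewis/John-type position of $V_n$) followed by a subsampling step tailored to the uniform norm --- exactly so that the resulting operator beats the crude Christoffel bound. So the proposal is a correct outer shell around an unproved core, and the core cannot be filled by the ingredients you have assembled; it requires the new construction of the cited paper.
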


Note that we have $d_n(\bF,L_\infty)$ in the right side of (\ref{kr1}), which is larger than $d_n(\bF,L_2)$. However, it is known that for many function classes we have $d_n(\bF,L_\infty) \asymp d_n(\bF,L_2)$. 
For special sets $\bF$ (in the reproducing kernel Hilbert space setting) the following inequality was proved (see \cite{DKU}, \cite{NSU}, \cite{KU}, and \cite{KU2}):
\be\label{H5a}
\ro_{cn}(\bF,L_2) \le \left(\frac{1}{n}\sum_{k\ge n} d_k (\bF, L_2)^2\right)^{1/2}
\ee
with an absolute constant $c>0$. Here, $d_k (\bF, L_2)$ is the Kolmogorov width of $\bF$ in 
the space $L_2$. 

Known results on the $d_k (\bW^r_q, L_2)$ (see, for instance, \cite{VTbookMA}, p.216): For $1<q\le 2$, $r > 1/q-1/2$ and $2<q<\infty$, $r> 0$
\be\label{H6a}
d_k(\bW^r_q,L_2) \asymp \left(\frac{(\log k)^{d-1})}{k}\right)^{r-(1/q-1/2)_+},\quad (a)_+ := \max(a,0),
\ee
 combined with the (\ref{H5a}) give for $1<q\le \infty$, $r > \max(1/q,1/2)$ the following bounds
$$
 \ro_m(\bW^r_q,L_2) \ll  \left(\frac{(\log m)^{d-1})}{m}\right)^{r-(1/q-1/2)_+}, 
$$
which gives the right orders of decay of the sequences $\ro_m(\bW^r_q,L_2)$ in the case $1<q<\infty$ and $r > \max(1/q,1/2)$ because, obviously, $ \ro_m(\bW^r_q,L_2) \ge  d_m(\bW^r_q,L_2)$.  

The above inequality (\ref{H5a}) can only be used, when the series in its right side converges. 
The above Theorem \ref{VT183T1} is useful even in the case, when the corresponding series diverges.  

\section{Proofs}
\label{Pr}

We discuss separately the upper bounds and the lower bounds. We begin with two results which connect the error of sampling recovery $\ro_m(\bW^K_q,L_p)$ and the error of sparse approximation. 

\subsection{Some connections}
\label{sr}

\begin{Proposition}\label{RNP1} Let $1\le q,p \le \infty$. Assume that for every $\bz\in \Omega^1$ we have $K(\bz,\cdot) \in L_{q'}(\Omega^2)$, $q':=q/(q-1)$. Then we have
 \be\label{RN1}
 \ro_m(\bW^K_q,L_p) \le \sigma_m(K,\cL\cK(p,q'))_{p,q'}.
 \ee
 \end{Proposition}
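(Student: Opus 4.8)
The plan is to realize the sampling recovery of $\bW^K_q$ directly from a near-optimal $m$-term approximation of the kernel $K$ in the adaptive system $\cL\cK(p,q')$. Fix any $\varepsilon>0$ and choose points $\bz^1,\dots,\bz^m\in\Omega^1$ together with functions $u_1,\dots,u_m\in L_p(\Omega^1)$ so that the residual kernel
$$
R(\bx,\by) := K(\bx,\by) - \sum_{j=1}^m u_j(\bx)K(\bz^j,\by)
$$
satisfies $\|R\|_{p,q'}\le \sigma_m(K,\cL\cK(p,q'))_{p,q'}+\varepsilon$; this is possible because each summand $u_j(\bx)K(\bz^j,\by)$ is an element of $\cL\cK(p,q')$ (the $\by$-factor is the slice $K(\bz^j,\cdot)$ and $u_j\in L_{p_1}(\Omega^1)=L_p(\Omega^1)$). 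I would then use $\{\bz^j\}_{j=1}^m$ as the sampling set and the linear operator $\Phi(a_1,\dots,a_m):=\sum_{j=1}^m a_j u_j$ as the recovery map, which indeed sends $\bbC^m$ into $L_p(\Omega^1)$. Note that the hypothesis $K(\bz,\cdot)\in L_{q'}(\Omega^2)$ guarantees, via H\"older's inequality, that for $f\in\bW^K_q$ the point value $f(\bz^j)=\int_{\Omega^2}K(\bz^j,\by)\varphi(\by)\,d\mu_2$ is well defined and finite, so the sampling procedure makes sense.

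The key step is to identify the recovery error with an integral against $R$. For $f\in\bW^K_q$ write $f=\int_{\Omega^2}K(\cdot,\by)\varphi(\by)\,d\mu_2$ with $\|\varphi\|_q\le1$. Since $f(\bz^j)=\int_{\Omega^2}K(\bz^j,\by)\varphi(\by)\,d\mu_2$, linearity of the integral and of $\Phi$ gives
$$
f(\bx)-\Phi\bigl(f(\bz^1),\dots,f(\bz^m)\bigr)(\bx)=\int_{\Omega^2}R(\bx,\by)\varphi(\by)\,d\mu_2.
$$
I would then estimate the $L_p(\Omega^1)$-norm of the right-hand side by the generalized Minkowski (integral) inequality, pulling the $L_p$-norm in $\bx$ inside the $\by$-integral, followed by H\"older's inequality in $\by$ with exponents $q$ and $q'$:
$$
\Bigl\|\int_{\Omega^2}R(\cdot,\by)\varphi(\by)\,d\mu_2\Bigr\|_p\le\int_{\Omega^2}\|R(\cdot,\by)\|_p\,|\varphi(\by)|\,d\mu_2\le\bigl\|\,\|R(\cdot,\by)\|_p\,\bigr\|_{q'}\,\|\varphi\|_q=\|R\|_{p,q'}.
$$
Taking the supremum over $f\in\bW^K_q$ and then the infimum over all sampling sets and linear recovery maps yields $\ro_m(\bW^K_q,L_p)\le\|R\|_{p,q'}\le\sigma_m(K,\cL\cK(p,q'))_{p,q'}+\varepsilon$, and letting $\varepsilon\to0$ gives (\ref{RN1}).

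The only real subtlety, and the step I would check most carefully, is the order of integration in the mixed norm $\|\cdot\|_{p,q'}$: by the definition of the vector $L_{\bp}$-norm one takes the $L_p$-norm in $\bx$ first (innermost) and the $L_{q'}$-norm in $\by$ second (outermost), which is precisely the order produced by applying generalized Minkowski before H\"older. The hypothesis $p\ge1$ needed for Minkowski's integral inequality holds throughout, and the endpoint cases $p=\infty$ or $q'=\infty$ (that is $q=1$) reduce to the evident essential-supremum versions of the same estimates. No compactness or structural properties of the specific classes $\bW^\br_\bq$, $\bH^\br_\bq$ enter here; the proposition is a general transference principle turning adaptive sparse approximation of $K$ into linear sampling recovery of $\bW^K_q$.
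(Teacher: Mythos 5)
Your proof is correct and follows essentially the same route as the paper: both identify the recovery error $f-\Phi(f(\bz^1),\dots,f(\bz^m))$ with the integral of the residual kernel against $\varphi$, then apply the generalized Minkowski inequality followed by H\"older to obtain the mixed norm $\|R\|_{p,q'}$, the only cosmetic difference being that you start from a near-optimal $m$-term approximant of $K$ and build the recovery operator, while the paper starts from an arbitrary linear recovery operator and takes the infimum at the end. Your explicit check of the order of the norms in $\|\cdot\|_{p,q'}$ and of the endpoint cases is a welcome addition but does not change the argument.
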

\begin{proof} Consider an operator $\Psi_m$ of linear recovery
$$
\Psi_m(f,\xi,\bx) := \sum_{j=1}^m f(\xi^j)\psi_j(\bx).
$$
Then we have for $f\in \bW^K_q$
$$
\|f(\bx)-\Psi_m(f,\xi,\bx)\|_p \le \int_{\Omega^2} \left\|K(\cdot,\by)-\sum_{j=1}^m K(\xi^j,\by)\psi_j(\cdot)\right\|_p |\ff(\by)|d\mu_2.
$$
This implies that 
$$
\sup_{f\in\bW^K_q} \|f(\bx)-\Psi_m(f,\xi,\bx)\|_p \le  \left\|K(\bx,\by)-\sum_{j=1}^m K(\xi^j,\by)\psi_j(\bx)\right\|_{p,q'}  .
$$
We now take infimum over sets of points $\{\xi^j\}_{j=1}^m$ and sets of functions $\{\psi_j\}_{j=1}^m$ and complete the proof. 

\end{proof}

For the next simple relation we need a new notation. Define for $p_1,p_2$
$$
\|f(\bx,\by)\|_{L^*_{p_1,p_2}}:=\|f(\bx,\by)\|^*_{p_1,p_2} := \|\|f(\bx,\cdot)\|_{p_2}\|_{p_1},
$$
which means that first we take the norm with respect to $\by$ and after that the norm with respect to $\bx$. 

 \begin{Proposition}\label{RNP2} Let $1\le q \le \infty$. Assume that for every $\bz\in \Omega^1$ we have $K(\bz,\cdot) \in L_{q'}(\Omega^2)$, $q':=q/(q-1)$. Then we have
 \be\label{RN2}
 \ro_m(\bW^K_q,L_\infty) = \sigma_m(K,\cL\cK(\infty,q'))_{L^*_{\infty,q'}}.
 \ee
 \end{Proposition}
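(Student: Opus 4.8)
The plan is to prove two matching bounds between $\ro_m(\bW^K_q,L_\infty)$ and $\sigma_m(K,\cL\cK(\infty,q'))_{L^*_{\infty,q'}}$; the point that upgrades the inequality of Proposition \ref{RNP1} to an equality here is that the outermost $L_\infty$ (supremum) norm in the $\bx$ variable leaves no slack. First I would record the structural identity underlying both quantities. Any linear recovery map $\Phi\colon\bbC^m\to L_\infty(\Omega^1)$ is determined by the images $\psi_j:=\Phi(e_j)\in L_\infty(\Omega^1)$ of the coordinate vectors, so $\Phi(f(\xi^1),\dots,f(\xi^m))=\sum_{j=1}^m f(\xi^j)\psi_j$. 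On the other side, after absorbing scalar coefficients, a generic element of $\Sigma_m(\cL\cK(\infty,q'))$ is exactly a sum $\sum_{j=1}^m u_j(\bx)K(\bz^j,\by)$ with $\bz^j\in\Omega^1$ and $u_j\in L_\infty(\Omega^1)$. Writing the residual $R(\bx,\by):=K(\bx,\by)-\sum_j K(\xi^j,\by)\psi_j(\bx)$, the two infima are then taken over the same family under the identification $\xi^j\leftrightarrow\bz^j$, $\psi_j\leftrightarrow u_j$, so it suffices to show, for each fixed choice of $\xi$ and $\{\psi_j\}$, that
$$\sup_{f\in\bW^K_q}\Bigl\|f-\sum_{j=1}^m f(\xi^j)\psi_j\Bigr\|_\infty=\|R\|^*_{\infty,q'},$$
and then take the infimum over all such choices.

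For the upper bound I would argue directly, \emph{without} Minkowski's integral inequality (which is what forces the lossy vector norm $\|\cdot\|_{p,q'}$ in Proposition \ref{RNP1}). For $f(\bx)=\int_{\Omega^2}K(\bx,\by)\ff(\by)\,d\mu_2$ with $\|\ff\|_q\le1$ one has $f(\bx)-\sum_j f(\xi^j)\psi_j(\bx)=\int_{\Omega^2}R(\bx,\by)\ff(\by)\,d\mu_2$, and pointwise H\"older in $\by$ gives $|f(\bx)-\sum_j f(\xi^j)\psi_j(\bx)|\le\|R(\bx,\cdot)\|_{q'}$. Taking the supremum over $\bx$ yields exactly $\|R\|^*_{\infty,q'}=\sup_{\bx}\|R(\bx,\cdot)\|_{q'}$, with no integral-Minkowski loss precisely because the order of the norms is the favorable one.

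The reverse inequality is the crux and the step I expect to be the main obstacle, since it is where equality (rather than mere $\le$) must be extracted. The key observation is that the supremum norm lets the adversary concentrate on a single worst point. Fixing $\e>0$, I would choose $\bx_0\in\Omega^1$ with $\|R(\bx_0,\cdot)\|_{q'}\ge\|R\|^*_{\infty,q'}-\e$, and by $L_q$--$L_{q'}$ duality pick $\ff_0$ with $\|\ff_0\|_q\le1$ and $\int_{\Omega^2}R(\bx_0,\by)\ff_0(\by)\,d\mu_2\ge\|R(\bx_0,\cdot)\|_{q'}-\e$; then $f_0(\bx):=\int_{\Omega^2}K(\bx,\by)\ff_0(\by)\,d\mu_2\in\bW^K_q$ satisfies $\|f_0-\sum_j f_0(\xi^j)\psi_j\|_\infty\ge\|R\|^*_{\infty,q'}-2\e$, and letting $\e\to0$ gives $\sup_f\ge\|R\|^*_{\infty,q'}$. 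Combining the two directions and taking the infimum over $\xi,\{\psi_j\}$ (equivalently over $\{\bz^j\},\{u_j\}$) produces the claimed equality. The only delicate points are the possible non-attainment of the supremum over $\bx_0$ and of the duality extremizer (notably when $q=1$, $q'=\infty$), both absorbed by the $\e$-arguments, together with the measurability and boundedness making $\|R\|^*_{\infty,q'}$ and $f_0$ well defined, which I would obtain from the standing hypothesis $K(\bz,\cdot)\in L_{q'}(\Omega^2)$ for every $\bz\in\Omega^1$.
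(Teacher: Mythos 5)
Your proposal is correct and follows essentially the same route as the paper: the paper obtains the key identity $\sup_{f\in\bW^K_q}\|f-\Psi_m(f,\xi,\cdot)\|_\infty=\|R\|^*_{\infty,q'}$ in one stroke by interchanging the suprema over $f$ and over $\bx$ and then invoking $L_q$--$L_{q'}$ duality pointwise in $\bx$, which is exactly what your two-sided $\e$-argument unpacks. Your explicit identification of linear recovery operators with $m$-sparse elements of $\cL\cK(\infty,q')$ is left implicit in the paper but is the same final step of taking the infimum over $\{\xi^j\}$ and $\{\psi_j\}$.
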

\begin{proof} In the same way as in the above proof of Proposition \ref{RNP1} we obtain for $f\in \bW^K_q$
$$
\|f(\bx)-\Psi_m(f,\xi,\bx)\|_\infty 
$$
$$
=\sup_{\bx\in\Omega^1}\left| \int_{\Omega^2} \left(K(\bx,\by)-\sum_{j=1}^m K(\xi^j,\by)\psi_j(\bx)\right) \ff(\by)d\mu_2 \right| =: \sup_{\bx\in\Omega^1}E(\bx,\ff).
$$
Therefore,
$$
\sup_{f\in\bW^K_q}\|f(\bx)-\Psi_m(f,\xi,\bx)\|_\infty= \sup_{f\in\bW^K_q}\sup_{\bx\in\Omega^1}E(\bx,\ff) = \sup_{\bx\in\Omega^1} \sup_{f\in\bW^K_q}E(\bx,\ff) 
$$
$$
=\left\|K(\bx,\by)-\sum_{j=1}^m K(\xi^j,\by)\psi_j(\bx)\right\|^*_{\infty,q'}.
$$
We now take infimum over sets of points $\{\xi^j\}_{j=1}^m$ and sets of functions $\{\psi_j\}_{j=1}^m$ and complete the proof. 
\end{proof}

\subsection{Upper bounds}
\label{ub}

{\bf Proof of upper bounds in Theorem \ref{InT2}.} We begin with the sampling recovery in the $L_2$ norm. We are interested in classes $\bW^K_q$ with $K$ coming from 
a class of functions of two variables. Theorem \ref{VT183T1} gives us the upper bound  
\be\label{N1}
\ro_{bn}(\bF,L_2(\Omega,\mu)) \le Bd_n(\bF,L_\infty),
\ee
which holds for any class $\bF$. So, we need a result on the upper bounds on the Kolmogorov widths.
The following  result is proved in \cite{VT32}.
 
  \begin{Theorem}[{\cite{VT32}}, Theorem 4.2]\label{BiT5} Let $d=1$ and $\bF^\br_1$ denote one of the classes $\bW^\br_1$ or $\bH^\br_1$. Then for $1\le q,p \le \infty$ and $\br > (1,1+\max(1/2,1/q))$   we have 
 $$
\sup_{K\in \bF^\br_1}  d_m(\bW^K_q)_p \asymp m^{-r_1-r_2 + \xi(q,p)} 
 $$
 with $\xi(q,p)$ defined in (\ref{ksi'}).
 \end{Theorem}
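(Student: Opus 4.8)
The plan is to pass through the well–known equivalence between the Kolmogorov width of the image class $\bW^K_q$ and the best $m$–term bilinear approximation of the kernel $K$, and then to estimate the latter directly on the kernel class $\bF^\br_1$.

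For the upper bound I would first record the bilinear–to–width reduction, which is the exact analogue for the dictionary $\Pi$ of the Hölder computation already carried out in Proposition \ref{RNP1}. If $K(x,y)$ is approximated by a rank $m$ bilinear form $\sum_{i=1}^m u_i(x)v_i(y)$, then for $f=\int K(\cdot,y)\varphi(y)\,d\mu_2$ with $\|\varphi\|_q\le1$, setting $c_i:=\int v_i\varphi\,d\mu_2$ and applying Hölder in $y$ followed by the $L_p$ norm in $x$ gives
$$
\Bigl\|f-\sum_{i=1}^m c_iu_i\Bigr\|_p\le \Bigl\|K(x,y)-\sum_{i=1}^m u_i(x)v_i(y)\Bigr\|^*_{p,q'};
$$
since $\sp\{u_i\}_{i=1}^m$ is at most $m$–dimensional, this yields $d_m(\bW^K_q)_p\le\sigma_m(K,\Pi)^*_{p,q'}$. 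It then remains to bound $\sup_{K\in\bF^\br_1}\sigma_m(K,\Pi)^*_{p,q'}$ from above. I would do this by the dyadic machinery of Section \ref{fc}: write $K=\sum_\bs A_\bs(K)$, so that Theorem \ref{H} (and its $\bW$–analogue) gives $\|A_\bs(K)\|_{(1,1)}\ll 2^{-(r_1s_1+r_2s_2)}$; each block is a trigonometric polynomial of degree $\asymp(2^{s_1},2^{s_2})$, which I approximate by a truncated Schmidt (singular value) decomposition of some rank $r_\bs$. Nikolskii–type inequalities convert the $L_2$ singular value bound into the ordered mixed norm $\|\cdot\|^*_{p,q'}$, and optimizing the allocation $\sum_\bs r_\bs\le m$ returns the exponent $-r_1-r_2+\xi(q,p)$. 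Here $\xi(q,p)=(1/q-\max(1/2,1/p))_+$ appears precisely because $L_2$ is the natural norm for rank (singular value) estimates, whence the threshold $\max(1/2,1/p)$, while the $1/q$ enters through the $L_{q'}$ norm in $y$.

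For the lower bound it suffices to exhibit one hard kernel $K\in\bF^\br_1$. I would take $K$ with Fourier coefficients supported on a single dyadic block with $2^{s_1}\asymp2^{s_2}\asymp m$, arranged along a diagonal frequency set $\Lambda$ of size $n\asymp m$ and normalized so that $\|A_\bs(K)\|_{(1,1)}\asymp 2^{-(r_1s_1+r_2s_2)}\asymp m^{-r_1-r_2}$, which forces $K\in\bF^\br_1$. The operator $I_K$ then carries the $L_q$–ball onto a scaled copy of the $\ell_q^n$–ball embedded into $L_p$ through the trigonometric system supported on $\Lambda$. The discrepancy between the $\ell_q$ norm of the coefficients and the $L_p$ norm of the polynomial on this $n$–dimensional block, together with the classical lower bounds for the Kolmogorov widths of finite–dimensional balls (taken with $n\asymp m$), produces the extra factor $m^{\xi(q,p)}$ and hence the matching lower bound $\gg m^{-r_1-r_2+\xi(q,p)}$. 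The passage from $\|\varphi\|_q$ to the coefficient norm is immediate for $q=2$ by Parseval and uses Hausdorff–Young / interpolation otherwise.

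The step I expect to be the main obstacle is the uniform upper estimate of the bilinear approximation in the \emph{ordered} mixed norm $\|\cdot\|^*_{p,q'}$: the $L_{q'}$ norm is taken in $y$ first and only then the $L_p$ norm in $x$, which is the opposite order to the bilinear result of Theorem \ref{InT5}, so one cannot simply quote that theorem. The rank allocation across blocks must be run so that the singular value ($L_2$) bounds survive the conversion to this particular mixed norm with no loss of power or logarithmic factors, and it is exactly at this point that the strengthened smoothness requirement $r_2>1+\max(1/2,1/q)$ is consumed.
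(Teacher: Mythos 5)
First, a framing point: this paper does not prove Theorem~\ref{BiT5} at all --- it is quoted verbatim from \cite{VT32} (Theorem~4.2 there) and used as a black box in Subsection~\ref{ub}. So your proposal has to be measured against the argument of that reference rather than against anything in the present text. Your overall architecture does match that source: the H\"older reduction $d_m(\bW^K_q)_p\le\sigma_m(K,\Pi)^*_{p,q'}$ (with the $L_{q'}$ norm taken in $y$ first), the dyadic block decomposition with a rank allocation across blocks for the upper bound, and a single-block convolution-type kernel combined with finite-dimensional width estimates for the lower bound are exactly the ingredients used there, and you have correctly located the crux in the ordered mixed norm.

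However, two steps are genuinely incomplete or wrong. (i) \emph{Upper bound.} You rightly observe that Theorem~\ref{InT5} cannot be quoted because the order of integration in the mixed norm is reversed, but your replacement --- that the block/SVD/Nikolskii/allocation machinery run in the norm $\|\cdot\|^*_{p,q'}$ produces exactly the exponent $\xi(q,p)$ --- is asserted, not derived. This is not a routine adaptation: reducing to the other order via the generalized Minkowski inequality yields the exponent $\xi(1,p)$, and swapping the roles of $x$ and $y$ yields $\xi(1,q')$, and neither equals $\xi(q,p)=(1/q-\max(1/2,1/p))_+$. The correct exponent only emerges from a separate optimization in which the $L_{q'}$ loss in $y$ and the $L_p$ loss in $x$ interact block by block (this is also where $\br>(1,1+\max(1/2,1/q))$ is actually used). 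That computation is the entire content of the theorem and is missing from your sketch. (ii) \emph{Lower bound.} The claim that $I_K$ carries the $L_q$-ball onto a scaled copy of the $\ell_q^n$ coefficient ball, with Hausdorff--Young bridging the gap, is false for $q\ne 2$ and destroys sharpness: for $q<2$ the only coefficient ball one can generically embed into the unit $L_q$-ball of an $n$-dimensional block is the $\ell_2$ ball, which would give the exponent $\xi(2,p)=0$ instead of $\xi(q,p)>0$. The correct route avoids coefficient norms altogether: take $K(x,y)=cn^{-r_1-r_2}\cV_n(x-y)$ with $n\asymp m$, note that then $\bW^K_q\supseteq cn^{-r_1-r_2}\,\cT(n)_q$ exactly (since $I_K\phi=cn^{-r_1-r_2}\phi$ for $\phi\in\cT(n)$), and invoke the known orders of $d_m(\cT(n)_q,L_p)$ for proportional $n$ --- itself a nontrivial theorem obtained from finite-dimensional width estimates via discretization, not from ``classical lower bounds for finite-dimensional balls'' plus interpolation.
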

We now apply Theorem \ref{BiT5} with  $p=\infty$ and obtain from (\ref{N1}) the following upper bounds 
for the $\ro_{m}(\bW^K_q,L_2)$.

 \begin{Theorem}\label{N1a} Let $d=1$ and $\bF^\br_1$ denote one of the classes $\bW^\br_1$ or $\bH^\br_1$. Then for $1\le q \le \infty$ and $\br > (1,1+\max(1/2,1/q))$   we have 
 $$
\sup_{K\in \bF^\br_1}  \ro_{m}(\bW^K_q,L_2) \ll m^{-r_1-r_2 + \xi(q,\infty)} = m^{-r_1-r_2 + (1/q-1/2)_+}.
 $$
 \end{Theorem}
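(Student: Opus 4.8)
The plan is to obtain the statement by composing two ingredients that are already available: the general sampling-recovery bound of Theorem \ref{VT183T1} (inequality (\ref{N1})) and the width estimate of Theorem \ref{BiT5}. Since both are handed to us, the proof should be short, and the text preceding the statement already announces exactly this route (``apply Theorem \ref{BiT5} with $p=\infty$ and obtain from (\ref{N1})''). First I would fix a kernel $K\in\bF^\br_1$ and check that the class $\bW^K_q$ is a compact subset of $\cC(\Omega^1)$, so that Theorem \ref{VT183T1} is legitimately applicable to $\bF=\bW^K_q$. This is where the smoothness hypothesis $\br>(1,1+\max(1/2,1/q))$ enters: the first coordinate $r_1>1$ guarantees enough smoothness of $K$ in the variable $\bx$ for each image function $f=I_K(\ff)$ to be continuous and for the image of the unit ball of $L_q(\Omega^2,\mu_2)$ to be precompact in the uniform norm.

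With compactness in hand, I would apply (\ref{N1}) to $\bF=\bW^K_q$ to get, with absolute constants $b,B$,
$$
\ro_{bn}(\bW^K_q,L_2)\le B\,d_n(\bW^K_q,L_\infty).
$$
Taking the supremum over $K\in\bF^\br_1$ on both sides and invoking Theorem \ref{BiT5} with $p=\infty$ yields
$$
\sup_{K\in\bF^\br_1}\ro_{bn}(\bW^K_q,L_2)\le B\sup_{K\in\bF^\br_1}d_n(\bW^K_q)_\infty\ll n^{-r_1-r_2+\xi(q,\infty)},
$$
where, by the definition (\ref{ksi'}), $\xi(q,\infty)=\left(1/q-\max(1/2,0)\right)_+=(1/q-1/2)_+$, matching the exponent in the claimed bound.

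It then remains to pass from the index $bn$ to an arbitrary $m$. Given $m$, I would set $n:=[m/b]$, so that $bn\le m$; since $\ro_j$ is nonincreasing in the number $j$ of sample points, monotonicity gives $\ro_m\le\ro_{bn}$. Because $b$ is an absolute constant we have $n\asymp m$, hence $n^{-r_1-r_2+\xi(q,\infty)}\asymp m^{-r_1-r_2+\xi(q,\infty)}$, and the extra factor is absorbed into the implied constant. This delivers exactly the asserted estimate for $\sup_{K\in\bF^\br_1}\ro_m(\bW^K_q,L_2)$.

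The main (and essentially only) obstacle is the first step: verifying that $\bW^K_q\subset\cC(\Omega^1)$ with precompact image, since Theorem \ref{VT183T1} is stated only for compact subsets of $\cC(\Omega)$. Everything after that is a mechanical composition of the two cited results together with the harmless reindexing $bn\mapsto m$. I expect the smoothness margin in the $\bx$-variable built into the hypothesis $\br>(1,1+\max(1/2,1/q))$ to make the compactness verification routine, so that no genuinely new work beyond Theorems \ref{VT183T1} and \ref{BiT5} is required.
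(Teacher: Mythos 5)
Your proposal is correct and follows essentially the same route as the paper: the paper's proof is precisely the composition of inequality (\ref{N1}) from Theorem \ref{VT183T1} with the width bound of Theorem \ref{BiT5} at $p=\infty$, exactly as you describe. Your added care about compactness of $\bW^K_q$ in $\cC(\Omega^1)$ and the reindexing $bn\mapsto m$ are details the paper leaves implicit, but they do not constitute a different argument.
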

 
 Note, that the use of inequality (\ref{H5a}) instead of Theorem \ref{VT183T1} will give the same upper bound. 
 
 In the case of measuring error of recovery in the $L_p$ norm with $2<p\le \infty$ we apply Theorem \ref{BiT5} and Theorem \ref{KPUU2} and obtain the following result. 
 
  \begin{Theorem}\label{N2a} Let $d=1$ and $\bF^\br_1$ denote one of the classes $\bW^\br_1$ or $\bH^\br_1$. Then for $1\le q \le \infty$, $2<p\le \infty$, and $\br > (1,1+\max(1/2,1/q))$   we have 
 $$
\sup_{K\in \bF^\br_1}  \ro_{m}(\bW^K_q,L_p) \ll   m^{-r_1-r_2 +(1/2-1/p)+ (1/q-1/2)_+}.
 $$
 \end{Theorem}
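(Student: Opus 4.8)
The plan is to combine the transference inequality of Theorem \ref{KPUU2} with the Kolmogorov width bound of Theorem \ref{BiT5}, exactly mirroring the argument that produced Theorem \ref{N1a} for the $L_2$ case, but now using the sharper $L_p$ sampling inequality in place of Theorem \ref{VT183T1}. The extra factor $n^{1/2-1/p}$ appearing in Theorem \ref{KPUU2} is precisely the source of the additional summand $(1/2-1/p)$ in the exponent.

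First I would specialize Theorem \ref{BiT5} to the endpoint $p=\infty$. Since $\xi(q,\infty) = (1/q - \max(1/2,0))_+ = (1/q-1/2)_+$, this furnishes, under the stated hypothesis $\br > (1, 1+\max(1/2,1/q))$, the uniform width bound
$$
\sup_{K\in\bF^\br_1} d_n(\bW^K_q)_\infty \ll n^{-r_1-r_2+(1/q-1/2)_+}.
$$
I emphasize that the supremum over $K$ is already built into the conclusion of Theorem \ref{BiT5}, so no separate uniformity argument over the kernels is required at this stage.

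Next, for each fixed admissible $K$ I would apply Theorem \ref{KPUU2} to the class $\bF = \bW^K_q$. With $2 < p \le \infty$ this gives an absolute constant $C$ such that
$$
\ro_{4n}(\bW^K_q, L_p) \le C\, n^{1/2-1/p}\, d_n(\bW^K_q, L_\infty).
$$
Taking the supremum over $K\in\bF^\br_1$ and inserting the width bound from the previous step yields
$$
\sup_{K\in\bF^\br_1}\ro_{4n}(\bW^K_q,L_p) \ll n^{1/2-1/p}\, n^{-r_1-r_2+(1/q-1/2)_+} = n^{-r_1-r_2+(1/2-1/p)+(1/q-1/2)_+}.
$$
Finally, setting $m = 4n$ (so that $n \asymp m$) and absorbing the fixed constant factor into the symbol $\ll$ delivers the claimed bound.

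Since the argument is just a two-step transference, there is no computational obstacle to grind through; the genuine work is already encapsulated in Theorems \ref{BiT5} and \ref{KPUU2}. The only points requiring care are the verification that $\bW^K_q$ is a (centrally symmetric) compact subset of $\cC(\Omega^1)$, so that Theorem \ref{KPUU2} is legitimately applicable — this follows from the smoothness of the kernel guaranteed by $\br > (1,1+\max(1/2,1/q))$, which forces the functions in $\bW^K_q$ to be continuous — and the harmless index shift from $4n$ to $m$, which affects only the implied constant. One should also note, as the text already remarks for the $L_2$ case, that an alternative route through inequality (\ref{H5a}) is unavailable here in general because the relevant series need not converge, which is exactly why the transference form of Theorem \ref{KPUU2} is the natural tool.
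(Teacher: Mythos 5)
Your proposal is correct and follows essentially the same route as the paper, which likewise obtains Theorem \ref{N2a} by combining the width bound of Theorem \ref{BiT5} at $p=\infty$ (giving the exponent $(1/q-1/2)_+$) with the sampling inequality of Theorem \ref{KPUU2} (supplying the extra factor $n^{1/2-1/p}$). The paper states this in one line without the explicit bookkeeping you carry out, but the substance is identical.
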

 
Theorems \ref{N1a} and \ref{N2a} cover the case $1\le q\le \infty$ and $2\le p\le \infty$. In the case $1\le q\le 2$ and $2\le p\le \infty$, which is addressed in Theorem \ref{InT2}, Theorems \ref{N1a} and \ref{N2a} provide the required upper bounds in Theorem \ref{InT2}. 

{\bf Proof of   Theorem \ref{InT3}.} This proof is similar to the above proof of Theorem \ref{InT2}. We use the following known result on the Kolmogorov widths. 

For $\bq =(q_1,q_2)$, $\bp=(p_1,p_2)$, $1\le q_1 \le p_1 \le \infty$, $1\le q_2,p_2 \le \infty$ denote
 $$
 \br(\bq,\bp) := \begin{cases} (1/q_1-1/p_1,(1/q_2-1/p_2)_+), & 1\le q_1\le p_1 \le 2, \\
 (1/q_1,1/q_2), & 2\le q_1\le p_1\le \infty, p_1>2,\\
 (1/q_1,\max(1/2,1/q_2)), & 1\le q_1<2< p_1 \le \infty.
 \end{cases}
 $$
 
 \begin{Theorem}[{\cite{VT32}}, Theorem 4.1]\label{BiT5a} Let $d=1$ and $\bF^\br_\bq$ denote one of the classes $\bW^\br_\bq$ or $\bH^\br_\bq$. Then for $\bp=(p,\infty)$, $1\le q_1 \le p \le \infty$, $1\le q_2\le \infty$  and $\br > \br(\bq,\bp)$   we have 
 $$
\sup_{K\in \bF^\br_\bq}  d_m(\bW^K_1)_p \asymp m^{-r_1-r_2 + \xi(q_1,p)} 
 $$
 with $\xi(q,p)$ defined in (\ref{ksi'}).
 \end{Theorem}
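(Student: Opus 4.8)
My plan is to identify the width $d_m(\bW^K_1)_p$ with a best rank-$m$ bilinear approximation of the kernel $K$ itself, and then to read off the rate from the already-available bilinear estimate, Theorem \ref{InT5}. The conceptual core is the identity
\be
d_m(\bW^K_1)_p = \sigma_m\bigl(K,\Pi(p,\infty)\bigr)_{p,\infty},
\ee
where the right-hand side is the error of approximating $K(\bx,\by)$ by sums $\sum_{i=1}^m u_i(\bx)v_i(\by)$ in the iterated norm $\|\cdot\|_{p,\infty}$ (first $L_p$ in $\bx$, then $L_\infty$ in $\by$). For the bound ``$\le$'' I would take any bilinear $G=\sum_{i=1}^m u_iv_i$ with $\sup_\by\|K(\cdot,\by)-G(\cdot,\by)\|_p\le\varepsilon$ and, for $f(\bx)=\int K(\bx,\by)\ff(\by)\,d\mu_2\in\bW^K_1$, use the approximant $\sum_i u_i(\bx)\int v_i(\by)\ff(\by)\,d\mu_2$, which lies in the fixed $m$-dimensional subspace $\sp\{u_i\}$; Minkowski's integral inequality then bounds the error by $\varepsilon\|\ff\|_1\le\varepsilon$, exactly as in the proof of Proposition \ref{RNP2}.

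For the reverse bound ``$\ge$'' I would start from an arbitrary $m$-dimensional subspace $U=\sp\{u_i\}$. Since each section $K(\cdot,\by)$ belongs to $\overline{\bW^K_1}$ (it is a limit of $\int K(\cdot,\by')\ff_n(\by')\,d\mu_2$ with $\ff_n\to\delta_\by$ and $\|\ff_n\|_1=1$), it is approximated by $U$ to within $d_m(\bW^K_1)_p$. Choosing a measurable family $\by\mapsto\sum_i v_i(\by)u_i$ of near-metric-projections onto $U$ produces a rank-$m$ bilinear $G$ with $\|K-G\|_{p,\infty}\le d_m(\bW^K_1)_p$, and taking the infimum over $U$ gives the claim. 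I expect the only delicate point here to be the measurable selection, since for $p\ne 2$ the metric projection onto $U$ is nonlinear; one handles this by an $\varepsilon$-net / measurable-selection argument, which costs nothing in the order.

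Granting the identity, the theorem reduces to estimating $\sup_{K\in\bF^\br_\bq}\sigma_m(K,\Pi(p,\infty))_{p,\infty}$, and Theorem \ref{InT5} with $\bp=(p,\infty)$ delivers precisely $m^{-r_1-r_2+\xi(q_1,p)}$. The one gap is the smoothness hypothesis: Theorem \ref{InT5} is stated under $\br>\mathbf 1$, whereas here the sharper condition $\br>\br(\bq,\bp)$ is claimed. To close this gap I would rerun the bilinear argument directly at block level: decompose $K=\sum_\bs A_\bs(K)$, use $\|A_\bs(K)\|_\bq\ll 2^{-(\br,\bs)}$ from Theorem \ref{H}, approximate each block (a trigonometric polynomial with spectrum in a $2^{s_1}\times2^{s_2}$ box) by a low-rank bilinear form via Nikol'skii inequalities together with a singular-value/discretization estimate for the coefficient array, and then distribute the rank budget $m$ across the blocks to optimize the total error. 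The matching lower bound would come from testing on a single extremal block, normalized to lie in $\bF^\br_\bq$, for which the width reduces to a finite-dimensional width $d_m(B_1^N,\ell_p^N)$ whose order is classical.

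The main obstacles I anticipate are (i) the sharp rank-versus-error estimate for a single polynomial block in the mixed norm $\|\cdot\|_{p,\infty}$, which is what forces the precise threshold $\br(\bq,\bp)$ and the $\max(1/2,1/p)$ appearing inside $\xi(q_1,p)$; and (ii) the lower-bound construction, where one must exhibit a kernel in $\bF^\br_\bq$ whose sections $K(\cdot,\by)$ are genuinely $m$-incompressible in $L_p$. Both are the standard hard cores of such width/bilinear theorems; the reduction carried out in the first two paragraphs is what makes them accessible.
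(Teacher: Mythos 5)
This statement is not proved in the paper at all: it is imported verbatim as Theorem~4.1 of \cite{VT32}, and the paper's only ``proof'' is the citation. So the relevant comparison is between your attempt and the argument of the cited source. Your central reduction, $d_m(\bW^K_1)_p=\sigma_m(K,\Pi(p,\infty))_{p,\infty}$, is correct and is exactly the mechanism the paper alludes to when it says that widths of $\bW^K_q$ are ``closely connected'' with bilinear approximation of $K$; both directions of your argument are sound (for the lower bound you need the sections $K(\cdot,\by)$ to lie in the $L_p$-closure of $\bW^K_1$, which holds because $\br>\br(\bq,\bp)$ forces $r_2>1/q_2$ and hence continuity of $\by\mapsto K(\cdot,\by)$ into $L_p$; the measurable-selection issue you flag is harmless since the outer norm is a supremum). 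In this sense your route is the same as that of \cite{VT32}, not a genuinely different one.

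The genuine gap is that after the reduction you have not proved the theorem but only transferred it to a statement that is itself unavailable at the required generality. Theorem \ref{InT5} is stated for $\br>\mathbf 1$, while Theorem \ref{BiT5a} is claimed under the weaker threshold $\br>\br(\bq,\bp)$; moreover \ref{InT5} is also only a citation of \cite{VT32}, so invoking it is circular as a proof of Theorem~4.1 of the same source. Everything that makes the cited theorem nontrivial --- the sharp rank-versus-error estimates for a single dyadic block of $K$ in the mixed norm $\|\cdot\|_{p,\infty}$ (which is where $\max(1/2,1/p)$ in $\xi(q_1,p)$ and the precise condition $\br(\bq,\bp)$ arise, via Kashin--Gluskin-type finite-dimensional width bounds), the optimal distribution of the rank budget over blocks, and the extremal kernel for the lower bound --- is left as a sketch in your last two paragraphs. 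You have correctly identified where the difficulty lives, but the proposal establishes only the (easy and classical) equivalence of the two problems, not the asymptotics themselves.
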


We now use Theorem \ref{BiT5a} with $p=\infty$, apply Theorems \ref{VT183T1}, \ref{KPUU2} and obtain Theorem \ref{InT3}.

{\bf Proof of   Theorem \ref{InT4}.} We derive it from Theorem \ref{InT3}, which we have proved above. For that we use Proposition \ref{RNP2} with $q=1$. It gives 
 \be\label{RN2a}
 \ro_m(\bW^K_1,L_\infty) = \sigma_m(K,\cL\cK(\infty))_{L^*_{\infty,\infty}} = \sigma_m(K,\cL\cK(\infty))_{L_{\infty,\infty}}.
 \ee
 We now apply Theorem \ref{InT3} with $p=\infty$ and complete the proof.
 
 \subsection{Lower bounds}
 \label{Lb}
 
 {\bf Proof of lower bounds in Theorem \ref{InT2}.} As we pointed out in Section \ref{In} the lower bounds in Theorem \ref{InT2} follow from Theorem \ref{LbT1}. We now give an independent proof. Denote $r:=r_1+r_2$ for $\br=(r_1,r_2)$. It is known (see \cite{VT31}) and it is not difficult to check that $F_r(x-y)$ with $r>2$ belongs to the closure (in the uniform norm) of  $\bW^\br_1$. Therefore, 
 $$
 \sup_{K\in \bF^\br_1}\ro_m(\bW^K_q,L_p) \ge \ro_m(\bW^r_q,L_p)
 $$
 and by Theorem \ref{univarR} we continue
 $$
 \ge m^{-r+(1/q-1/p)_+}.
 $$
 
 {\bf Proof of   Theorem \ref{LbT1}.} In this subsection we need a slightly more general Bernoulli kernels and integral operators related to them (see \cite{VTbookMA}, Section 1.4). In the univariate case, for $a>0$, and $\al \in \bbR$ let
 $$
 F_{a,\al}(x):= 1+2\sum_{k=1}^\infty k^{-a}\cos (kx-\al\pi/2)
 $$
\be\label{Lb1}
  = 1+\sum_{k=1}^\infty k^{-a}(e^{i\al\pi/2}e^{-ikx}+e^{-i\al\pi/2}e^{ikx})
\ee
be the generalised Bernoulli kernel. Clearly, we have $F_{a}(x) = F_{a,a}(x)$, where $F_a(x)$ is defined in (\ref{Bi8}). Define the integral operator, acting on trigonometric polynomials $\phi(x)$, as
\be\label{Lb2}
(I^{(a,\al)}\phi)(x) := (I^{(a,\al)}_x\phi)(x) := ( F_{a,\al} \ast \phi)(x):= \frac{1}{2\pi}\int_{\bbT} F_{a,\al}(x-z) \phi(z)dz.
\ee
The operator $I^{(a,\al)}$ is the multiplier operator:
\be\label{Lb3}
(I^{(a,\al)}\phi)(x) = \hat \phi(0)+ \sum_{k<0}  |k|^{-a}e^{i\al\pi/2} \hat{\phi}(k)e^{ikx}+ \sum_{k>0}k^{-a}e^{-i\al\pi/2} \hat{\phi}(k)e^{ikx}.
\ee
We now define the inverse operator to the operator $I^{(a,\al)}$, acting on the trigonometric polynomials from $\cT(2n)$ (we take $2n$ for convenience in the future use). Define
$$
\cD^{(a,\al)}_{2n}(x) := 1+2\sum_{k=1}^{2n} k^{a}\cos (kx+\al\pi/2)
$$
and the operator (for $g \in \cT(2n)$) 
\be\label{Lb4}
(D^{(a,\al)}g)(x) := (D^{(a,\al)}_xg)(x) := (\cD^{a,\al}_{2n} \ast g)(x):= \frac{1}{2\pi}\int_{\bbT} \cD^{a,\al}_{2n}(x-z) g(z)dz.
\ee
The operator $D^{(a,\al)}$ is the multiplier operator:
\be\label{Lb3a}
(D^{(a,\al)}g)(x) = \hat g(0)+ \sum_{k<0}  |k|^{a}e^{-i\al\pi/2} \hat{g}(k)e^{ikx}+ \sum_{k>0}k^{a}e^{i\al\pi/2} \hat{g}(k)e^{ikx}.
\ee
It is easy to see that for $g \in \cT(2n)$ we have
\be\label{Lb5}
I^{(a,\al)}D^{(a,\al)}g   = g. 
\ee
Also, we have for $g \in \cT(2n)$
\be\label{Lb6}
(D^{(b,\bt)}_yD^{(a,\al)}_x)g(x-y) = (D^{(a+b,\al-\bt)}g)(x-y).
\ee
For a periodic $f \in L_q(\bbT)$ we have for any $y$
\be\label{Lb7}
\|f(\cdot-y)\|_q = \|f\|_q\quad \text{and}\quad \|f(x-y)\|_{(q_1,\infty)}=\|f\|_{q_1}. 
\ee
We now consider the kernel $K(x,y) := \cV_n(x-y)$, where $\cV_n$ is the de la Vall{\'e}e Poussin kernel defined in Section \ref{fc}. Then it is clear that for any $\phi \in \cT(n)$ we have $I_K(\phi) = \phi$. For given $\br = (r_1,r_2)$ and $\bq=(q_1,\infty)$ we normalize the kernel $K(x,y)$ in such a way that it belongs to $ \bW^\br_\bq$. We have $\cV_n \in \cT(2n)$ and, therefore, we can apply the above relations (\ref{Lb5})--(\ref{Lb7}). Define  
$$
\ff(x,y) := (D^{(r_2,r_2)}_yD^{(r_1,r_1)}_x)K(x,y) 
$$
and use (\ref{Lb6})
$$
 = (D^{(r_1+r_2,r_1-r_2)}\cV_n)(x-y).
$$
Using (\ref{Lb7}), we obtain
$$
\|\ff\|_{(q_1,\infty)} = \|D^{(r_1+r_2,r_1-r_2)}\cV_n\|_{q_1}.
$$
We now use the Bernstein inequality (see \cite{VTbookMA}, p.20) and the known bound for the $\|\cV_n\|_{q_1}$ (see \cite{VTbookMA}, p.9) and continue
$$
\le Cn^{r_1+r_2 +1-1/q_1}.
$$
Thus, there exists a positive absolute constant $c$ such that 
\be\label{Lb8}
\|cn^{-(r_1+r_2 +1-1/q_1)}\ff\|_{(q_1,\infty)} \le 1. 
\ee
By the property (\ref{Lb5}) we obtain
\be\label{Lb9}
cn^{-(r_1+r_2 +1-1/q_1)}K \in \bW^\br_{(q_1,\infty)}.
\ee
In order to obtain some lower estimates in our Problem K setting we need known results on sampling recovery. We formulate these results in the whole generality despite of the fact that we only need a special case of it.  
Denote for $\bN = (N_1,\dots,N_d)$, $N_j\in \bbN_0$, $j=1,\dots,d$,
 $$
\Pi(\bN,d) := \{\bk \in \bbZ^d\,:\, |k_j|\le N_j, j=1,\dots,d\}
$$
and
$$
\cT(\bN,d) := \left\{f=\sum_{\bk\in \Pi(\bN,d)} c_\bk e^{i(\bk,\bx)}\right\},\quad \vartheta(\bN) :=\prod_{j=1}^d (2N_j+1). 
$$
In this section $\Omega = \bbT^d$ and $\mu$ is the normalized Lebesgue measure on $\bbT^d$. The following Lemma \ref{qpL1} was proved in \cite{VT203}.

\begin{Lemma}[{\cite{VT203}}]\label{qpL1} Let $1\le q\le p\le \infty$ and let  $\cT(\bN,d)_q$ denote the unit $L_q$-ball of the subspace $\cT(\bN,d)$. Then we have for $m\le \vartheta(\bN)/2$ that
$$
\varrho_m^o(\cT(2\bN,d)_q,L_p) \ge c(d)\vartheta(\bN)^{1/q-1/p}  .
$$
\end{Lemma}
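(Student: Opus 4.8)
The plan is to combine a central-symmetry (fooling) argument with a Hilbert-space extremal construction in $\cT(\bN,d)$ and then a squaring trick that lands the resulting function in $\cT(2\bN,d)$. Throughout write $D:=\vartheta(\bN)$.

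\textbf{Fooling reduction.} First I would exploit that the class $\cT(2\bN,d)_q$ is centrally symmetric. For any node set $\xi=\{\xi^1,\dots,\xi^m\}$ and any (possibly nonlinear) map $\cM:\bbC^m\to L_p$, suppose $g\in\cT(2\bN,d)$ satisfies $\|g\|_q\le1$ and $g(\xi^j)=0$ for all $j$. Then $g$ and $-g$ both lie in $\cT(2\bN,d)_q$ and produce the same data $\mathbf 0\in\bbC^m$, so $\cM$ returns a single $h$, and $\max(\|g-h\|_p,\|{-}g-h\|_p)\ge\tfrac12\|2g\|_p=\|g\|_p$. Taking $\inf$ over $\cM$ and $\xi$ gives $\varrho_m^o(\cT(2\bN,d)_q,L_p)\ge\inf_\xi\sup\{\|g\|_p:\ g\in\cT(2\bN,d),\ \|g\|_q\le1,\ g|_\xi=0\}$. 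It then remains, for an \emph{arbitrary} $\xi$ with $m\le D/2$, to construct one admissible $g$ with $\|g\|_p\gg D^{1/q-1/p}\|g\|_q$.

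\textbf{Vanishing subspace and reproducing kernel.} I would work first in $\cT(\bN,d)$, whose dimension is $D$. The subspace $V:=\{t\in\cT(\bN,d):t(\xi^j)=0,\ j=1,\dots,m\}$ has $\dim V\ge D-m\ge D/2$. Let $K_\bx(\cdot)=\cD_{\bN}(\cdot-\bx)$ be the $L_2(\bbT^d,\mu)$ reproducing kernel of $\cT(\bN,d)$, so that $\langle f,K_\bx\rangle=f(\bx)$ and $\|K_\bx\|_2^2=\cD_{\bN}(0)=D$. Choosing an orthonormal basis $\{e_i\}$ of $V$ and using $\langle K_\bx,e_i\rangle=\overline{e_i(\bx)}$, the Parseval relation gives $\int_{\bbT^d}\|P_VK_\bx\|_2^2\,d\mu=\sum_i\|e_i\|_2^2=\dim V\ge D/2$. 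Since $\mu$ is a probability measure, there is a point $\bx_0$ with $\|P_VK_{\bx_0}\|_2^2\ge\dim V\ge D/2$. Setting $t:=P_VK_{\bx_0}\in V$, one has $t(\bx_0)=\langle P_VK_{\bx_0},K_{\bx_0}\rangle=\|t\|_2^2$, hence $\|t\|_\infty\ge\|t\|_2^2\ge(D/2)^{1/2}\|t\|_2$. Thus $t$ vanishes at all nodes and is an $L_2\to L_\infty$ Nikol'skii near-extremizer.

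\textbf{Squaring and two Nikol'skii inequalities.} Now set $g:=|t|^2=t\bar t$. Since $t\in\cT(\bN,d)$ we get $g\in\cT(2\bN,d)$ and $g\ge0$, and $g$ vanishes at the nodes. As $\|g\|_s=\|t\|_{2s}^2$, writing $a:=2q\ge2$ and $b:=2p\ge2$ (both exponents are $\ge2$ precisely because $q,p\ge1$ — this is the role of squaring), it suffices to prove $\|t\|_b/\|t\|_a\gg D^{1/a-1/b}$. For the denominator I use the Nikol'skii inequality $\|t\|_a\le C(d)D^{1/2-1/a}\|t\|_2$, valid for $a\ge2$. For the numerator I apply the Nikol'skii inequality $\|t\|_\infty\le C(d)D^{1/b}\|t\|_b$ to the bound from the previous step, which yields $\|t\|_b\ge c(d)D^{-1/b}\|t\|_\infty\ge c(d)D^{1/2-1/b}\|t\|_2$. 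Dividing gives $\|t\|_b/\|t\|_a\ge c(d)D^{1/a-1/b}$, so $\|g\|_p/\|g\|_q=(\|t\|_b/\|t\|_a)^2\ge c(d)D^{2(1/a-1/b)}=c(d)D^{1/q-1/p}$. Normalizing $g$ in $L_q$ and inserting it into the fooling bound completes the argument.

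\textbf{Where the difficulty lies.} The main obstacle is manufacturing, inside the \emph{arbitrary} large subspace $V$, a function with an extremal \emph{intermediate} ratio $\|\cdot\|_{2p}/\|\cdot\|_{2q}$: a generic $t\in V$ has $\|t\|_\infty/\|t\|_2$ only of order $\sqrt{\log D}$, so one cannot simply pick an element of $V$. The reproducing-kernel/trace averaging supplies concentration only at the single endpoint $L_2\to L_\infty$, and it is not obvious how to control a fixed function at intermediate exponents. The device that resolves this is the squaring, which pushes both relevant exponents above $2$ and lets the upper Nikol'skii inequality (on $L_a$) and the lower endpoint estimate transported by the Nikol'skii inequality (from $L_b$ to $L_\infty$) meet at the exact power $D^{1/q-1/p}$; this also explains why the statement is phrased for $\cT(2\bN,d)$ with the budget $m\le\vartheta(\bN)/2$.
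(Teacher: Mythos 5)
Your proof is correct. Note that the paper does not prove this lemma at all --- it is imported verbatim from \cite{VT203} --- so there is no in-paper argument to compare against; your reconstruction follows the standard route used in that reference: the fooling reduction for centrally symmetric classes, the Christoffel-function/reproducing-kernel averaging to produce $t\in\cT(\bN,d)$ vanishing at the nodes with $\|t\|_\infty\ge (D/2)^{1/2}\|t\|_2$, and the squaring $g=|t|^2\in\cT(2\bN,d)$ combined with the two Nikol'skii inequalities to get the ratio $\|g\|_p/\|g\|_q\gg D^{1/q-1/p}$. All steps check out, including the edge cases $p=\infty$ and $q=p$.
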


Combining Lemma \ref{qpL1} with the above arguments we obtain Theorem \ref{LbT1}.

\section{Discussion}
\label{D}

In this section we present one more (probably, it is the only additional one) of known results on relations between the Problem $K$ and the sparse approximation with adaptive dictionaries. Namely, we discuss here some known results on connections between 
numerical integration and nonlinear approximation. The reader can find these and other related results in \cite{VTbookMA}, Section 6.3 and in \cite{VT170}. We begin with the necessary definitions. We formulate the numerical integration problem in a general setting.  Numerical integration seeks good ways of approximating an integral
$$
\int_\Omega f(\bx)d\mu
$$
by an expression of the form
\be\label{A.1}
\La_m(f,\xi) :=\sum_{j=1}^m\la_jf(\xi^j),\quad \xi=(\xi^1,\dots,\xi^m),\quad \xi^j \in \Omega,\quad j=1,\dots,m. 
\ee
It is clear that we must assume that $f$ is integrable and defined at the points
 $\xi^1,\dots,\xi^m$. Expression (\ref{A.1}) is called a {\it cubature formula} $(\xi,\La)$ (if $\Omega \subset \bbR^d$, $d\ge 2$) or a {\it quadrature formula} $(\xi,\La)$ (if $\Omega \subset \bbR$) with knots $\xi =(\xi^1,\dots,\xi^m)$ and weights $\La:=(\la_1,\dots,\la_m)$. 
 
 For a function class $\bW$ we introduce a concept of error of the cubature formula $\La_m(\cdot,\xi)$ by
\be\label{A.3}
\La_m(\bW,\xi):= \sup_{f\in \bW} |\int_\Omega fd\mu -\La_m(f,\xi)|. 
\ee
The problem of finding optimal in the sense of order cubature formulas for a given class is of special importance. This means 
that we are looking for a cubature formula $\La_m^{opt}(\bW,\xi)$ such that
\be\label{A.4}
\La_m^{opt}(\bW,\xi) \asymp \inf_{\xi,\La}\La_m(\bW,\xi)=: \kappa_m(\bW).
\ee

We now present a setting of this problem for the $\bW^K_q$ classes.  Let $1\le q\le \infty$. We define a set $\mathcal K_q$ of kernels possessing the following properties. 
 Let $K(\bx,\by)$ be a measurable function on $\Omega^1\times\Omega^2$.
 We assume that for any $\bx\in\Omega^1$ we have $K(\bx,\cdot)\in L_q(\Omega^2)$; for any $\by\in \Omega^2$ the $K(\cdot,\by)$ is integrable over $\Omega^1$ and $\int_{\Omega^1} K(\bx,\cdot)d\bx \in L_q(\Omega^2)$. As above, for $1\le q\le \infty$ and a kernel $K\in \mathcal K_{q'}$, $q':=q/(q-1)$, we define the class
\be\label{H.5}
\bW^K_q :=\left\{f:f=\int_{\Omega^2}K(\bx,\by)\varphi(\by)d\by,\quad\|\varphi\|_{L_q(\Omega^2)}\le 1\right\}.  
\ee
Then each $f\in \bW^K_q$ is integrable on $\Omega^1$ (by Fubini's theorem) and defined at each point of $\Omega^1$. We denote for convenience
$$
 J_K(\by):=\int_{\Omega^1}K(\bx,\by)d\bx.
$$

For a cubature formula $\Lambda_m(\cdot,\xi)$ we have
$$
\Lambda_m(\bW^K_q,\xi) = \sup_{\|\varphi\|_{L_q(\Omega^2)}\le 1} |\int_{\Omega^2}\bigl( J_K(\by)-\sum_{\mu=1}^m\lambda_\mu K(\xi^\mu,\by)\bigr)\varphi(\by)d\by|=
$$
\be\label{H.6}
=\left\| J_K(\cdot)-\sum_{\mu=1}^m\lambda_\mu K(\xi^\mu,\cdot)\right\|_{L_{q'}(\Omega^2)}.
\ee
 
Thus, we obtain the following relation.

 \begin{Proposition}\label{RNP3} Let $1\le q \le \infty$. Assume that $K\in \mathcal K_{q'}$, $q':=q/(q-1)$. Then we have
 \be\label{RN3}
 \kappa_m(\bW^K_q) := \inf_{\la_1,\dots,\la_m;\xi^1,\dots,\xi^m}\La_m(\bW^K_q,\xi) = \sigma_m(J_K,\cD(K)_{L_{q'}},
 \ee
 where $\cD(K) := \{K(\bz,\by)\}_{\bz\in \Omega^1}$.  
 \end{Proposition}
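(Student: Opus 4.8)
The plan is to prove the identity in two stages: first fix a single cubature formula and show its worst-case error over $\bW^K_q$ is one $L_{q'}$-norm, and then minimise over all formulas and recognise the result as the sparse-approximation error $\sigma_m(J_K,\cD(K))_{L_{q'}}$.

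First I would fix weights $\la_1,\dots,\la_m$ and knots $\xi^1,\dots,\xi^m$ and take an arbitrary $f\in\bW^K_q$, written via (\ref{H.5}) as $f=\int_{\Omega^2}K(\cdot,\by)\varphi(\by)d\by$ with $\|\varphi\|_{L_q(\Omega^2)}\le1$. The two pieces of the error are computed separately. For the integral $\int_{\Omega^1}f\,d\bx$, the hypothesis $K\in\mathcal K_{q'}$ is exactly what is needed to apply Fubini's theorem, giving $\int_{\Omega^1}f\,d\bx=\int_{\Omega^2}J_K(\by)\varphi(\by)d\by$ with $J_K\in L_{q'}(\Omega^2)$. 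For the cubature sum, evaluating the representation of $f$ at each knot yields $\La_m(f,\xi)=\int_{\Omega^2}\bigl(\sum_{\mu=1}^m\la_\mu K(\xi^\mu,\by)\bigr)\varphi(\by)d\by$. Subtracting, the error becomes a single pairing $\int_{\Omega^2}g(\by)\varphi(\by)d\by$ against the fixed function $g:=J_K-\sum_{\mu=1}^m\la_\mu K(\xi^\mu,\cdot)$, which lies in $L_{q'}(\Omega^2)$ because each $K(\xi^\mu,\cdot)$ does.

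Next I would take the supremum over $\|\varphi\|_{L_q}\le1$ and invoke the standard $L_q$--$L_{q'}$ duality, $\sup_{\|\varphi\|_q\le1}|\int g\varphi|=\|g\|_{q'}$, which reproduces the displayed relation (\ref{H.6}):
$$
\La_m(\bW^K_q,\xi)=\Bigl\|J_K-\sum_{\mu=1}^m\la_\mu K(\xi^\mu,\cdot)\Bigr\|_{L_{q'}(\Omega^2)}.
$$
Finally, taking the infimum of both sides over all weights $\la_\mu$ and knots $\xi^\mu$, the right-hand side becomes the infimum of $\|J_K-h\|_{L_{q'}}$ over all $h=\sum_{\mu=1}^m\la_\mu K(\xi^\mu,\cdot)$; since $\xi^\mu$ ranges over all of $\Omega^1$ and the $\la_\mu$ over all scalars, these $h$ are exactly the elements of $\Sigma_m(\cD(K))$ for the dictionary $\cD(K)=\{K(\bz,\cdot)\}_{\bz\in\Omega^1}$, so this infimum is $\sigma_m(J_K,\cD(K))_{L_{q'}}$, as claimed. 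The argument involves no genuine obstacle---it is essentially a repackaging of (\ref{H.6})---but two points deserve care: the use of Fubini, which is licensed precisely by the integrability conditions defining $\mathcal K_{q'}$, and the duality at the endpoints $q=1$ (so $q'=\infty$) and $q=\infty$ (so $q'=1$), where one reads $\|g\|_{q'}$ as the essential supremum, respectively takes $\varphi=\sign g$, to see that the supremum is the correct $L_{q'}$-norm.
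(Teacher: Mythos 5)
Your argument is correct and is essentially identical to the paper's: the paper likewise evaluates the representation of $f$ at the knots, uses Fubini to write $\int_{\Omega^1}f\,d\bx=\int_{\Omega^2}J_K\varphi\,d\mu$, applies $L_q$--$L_{q'}$ duality to obtain the identity (\ref{H.6}), and then takes the infimum over weights and knots to identify the result with $\sigma_m(J_K,\cD(K))_{L_{q'}}$. Your extra remarks on the endpoint cases $q=1$ and $q=\infty$ are a harmless refinement of the same computation.
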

 
 In Proposition \ref{RNP3} we have the sparse approximation problem with an adaptive dictionary. Namely, we approximate $J_K$ with respect to $\cD(K)$. 


  {\bf Acknowledgements.}  
This work was supported by the Russian Science Foundation under grant no. 23-71-30001, https://rscf.ru/project/23-71-30001/, and performed at Lomonosov Moscow State University.
 
 \newpage

 \Addresses
 
\end{document}